\providecommand\@dotsep{5}
\def\listtodoname{List of Todos}
\def\listoftodos{\@starttoc{tdo}\listtodoname}
\numberwithin{equation}{section}
\def\dis{\displaystyle}
\newtheorem{definition}{Definition}
\newenvironment{taggedtheorem}[1]
 {\taggedtheoremx}
 {\endtaggedtheoremx}
\def\R {{\rm I}\hskip -0.85mm{\rm R}}
\def\N {{\rm I}\hskip -0.85mm{\rm N}}
\def\dis{\displaystyle}
\newtheorem{theorem}{Theorem}[section]
\newtheorem{proposition}[theorem]{Proposition}
\newtheorem{lemma}[theorem]{Lemma}
\newtheorem{corollary}[theorem]{Corollary}
\newtheorem{remark}{Remark}[section]
\title[positive solutions for Kirchhoff  equations
via bifurcation methods]
{Existence results of positive solutions for \\ Kirchhoff type equations
via bifurcation methods}
\author[W. Cintra]{Willian Cintra}
\author[J. R. Santos Jr.]{Jo\~ao R. Santos J\'unior}
\author[G. Siciliano]{Gaetano Siciliano}
\author[A. Su\'arez]{Antonio Su\'arez}
\address[W. Cintra]{\newline\indent Faculdade de Matem\'atica
\newline\indent 
Instituto de Ci\^{e}ncias Exatas e Naturais
\newline\indent 
Universidade Federal do Par\'a
\newline\indent
Avenida Augusto corr\^{e}a 01, 66075-110, Bel\'em, PA, Brazil}
\email{\href{mailto: willian\_matematica@hotmail.com }{willian\_matematica@hotmail.com}}
\address[J. R. Santos Jr.]{\newline\indent Faculdade de Matem\'atica
\newline\indent 
Instituto de Ci\^{e}ncias Exatas e Naturais
\newline\indent 
Universidade Federal do Par\'a
\newline\indent
Avenida Augusto corr\^{e}a 01, 66075-110, Bel\'em, PA, Brazil}
\email{\href{mailto: joaojunior@ufpa.br }{joaojunior@ufpa.br}}
\address[G. Siciliano]{\newline\indent Departamento de Matem\'atica
\newline\indent 
Instituto de Matem\'atica e Estat\'istica
\newline\indent 
 Universidade de S\~ao Paulo 
\newline\indent 
Rua do Mat\~ao 1010,  05508-090, S\~ao Paulo, SP, Brazil }
\email{\href{mailto:sicilian@ime.usp.br}{sicilian@ime.usp.br}}
\address[A. Su\'arez]{\newline\indent Departamento de Ecuaciones Diferenciales y An{\'a}lisis Num{\'e}rico
\newline\indent 
Facultad de Matem{\'a}ticas
\newline\indent 
Universidad de Sevilla
\newline\indent
C/. Tarfia s/n, 41012, Sevilla, Spain.}
\email{\href{mailto: suarez@us.es}{suarez@us.es}}
\thanks{
Willian Cintra was partially supported by CAPES, Brazil. Jo\~ao R. Santos J\'unior was partially supported by CNPq-Proc. 302698/2015-9 and CAPES-Proc. 88881.120045/2016-01, Brazil. Antonio Su\'arez has been partially supported by MTM2015-69875-P (MINECO/FEDER, UE) and  CNPq-Proc. 400426/2013-7.
Gaetano Siciliano  was partially supported by
Fapesp, Capes and CNPq, Brazil. }
\subjclass[2010]{ 35J15, 
 35J25, 
 35Q74. 
 }
\keywords{ Kirchhoff type equation, logistic nonlinearity problem, bifurcation method.}
\begin{document}

\maketitle
\begin{abstract}
In this paper we address the following Kirchhoff type problem 
\begin{equation*}
    \left\{ \begin{array}{ll}
         -\Delta(g(|\nabla u|_2^2) u + u^r) = a u + b u^p& \mbox{in}~\Omega,  \\
         u>0& \mbox{in}~\Omega,  \\
         u= 0& \mbox{on}~\partial\Omega,
    \end{array} \right.
\end{equation*}
in a bounded and smooth domain $\Omega$ in $\R^{N}$. By using change of variables and bifurcation methods, we 
show, under suitable conditions on the parameters $a,b,p,r$ and
the nonlinearity $g$, the existence of positive solutions.
\end{abstract}
\maketitle

\section{Introduction}

In the recent paper \cite{JG} two of the authors studied the following 
Kirchhoff type problem
\begin{equation}\label{eq:JG}
    \left\{ \begin{array}{ll}
         -\text{div}(m(u,|\nabla u|_2^2)\nabla  u ) = f(x,u)& \mbox{in}~\Omega,  \\
         u= 0& \mbox{on}~\partial\Omega,
    \end{array} \right.
\end{equation}
in the smooth and bounded domain $\Omega\subset \R^{N}, N\geq1$,
where $f$ is a sublinear nonlinearity and $m:\R\times[0,+\infty)\to \R$
is a function such that, setting
$$m_{t}: s\in  \R\mapsto m(s,t)\in \R,$$
the following hold:
\begin{enumerate}[label=(m\arabic*),ref=m\arabic*,start=0]
\item\label{m_{0}} $m: \R\times[0,+\infty)\to \mathbb (0,+\infty)$ is  continuous;  \medskip
\item\label{m_{1}} there is $\mathfrak m >0$ such that $m(s, t)\geq \mathfrak m $ for all $s\in\R$ and $t\in[0,\infty)$;  
\medskip
\item\label{m_{2}} for each $t\in [0,+\infty)$ the map 
$m_{t}:\R\to(0,+\infty)$ is strictly decreasing in $(-\infty,0)$  and strictly increasing in $(0,+\infty)$. \medskip 
\end{enumerate}
In particular the class of such admissible $m$ is very huge. 
In \cite{JG} the problem was addressed by finding the fixed point of the map
$$S: t\in[0,+\infty) \mapsto \int_{\Omega}|\nabla u_{t}|^{2}dx\in (0,+\infty)$$
where $u_{t}$ is the unique solution of the auxiliary problem
\begin{equation}\label{eq:Pr}
    \left\{ \begin{array}{ll}
         -\text{div}(m_{t}(u)\nabla  u ) = f(x,u)& \mbox{in}~\Omega,  \\
         u= 0& \mbox{on}~\partial\Omega.
    \end{array} \right.
\end{equation}
Indeed, the main difficulty related to  the method used was  to guarantee existence, uniqueness
and a priori bound with respect to $t$ for the solutions of \eqref{eq:Pr}.
In particular, the uniqueness was obtained in virtue of the sublinearity condition of the nonlinearity
$f$. However, as remarked in  \cite[Remark 5]{JG}, the uniqueness of the solution to \eqref{eq:Pr}
was not necessary in order to employ the method:
since the solution of \eqref{eq:JG} is obtained as a fixed point of $S$,
all that is needed is the existence of a continuous branch of solutions $t\mapsto u_{t}$
to   \eqref{eq:Pr}, rather than the uniqueness of the solution $u_{t}$.

We point out here that similar methods to that in \cite{JG} has been used recently in
\cite{JG2} to deal with a biharmonic Kirchhoff operator.

\medskip

Motivated by the above remark, in this paper we study a Kirchhoff type problem
where the uniqueness of the solution to the related auxiliary problem is not expected, due to the fact that
the nonlinearity is not sublinear. 
Indeed, in this paper bifurcation methods are used in order to circumvent the lack of uniqueness
and obtain a continuum of positive solutions. We point out that bifurcation theory has been used by other authors to study Kirchhoff problems (see, for instance, \cite{ArcoyaAmbrosetti, FMSS, shi} and references therein) and arguments combining bifurcation theory with Bolzano Theorem were used to analyze non-local elliptic problems in \cite{ArcoyaLP}.

More specifically, we study here
%
%
%
%
%
%
the existence of classical positive solutions for the  following problem
with a logistic type nonlinearity:
\begin{equation}\label{P1}
    \left\{ \begin{array}{ll}
         -\Delta(g(|\nabla u|_2^2) u + u^r) = a u + b u^p& \mbox{in}~\Omega,  \\
         u>0& \mbox{in}~\Omega,  \\
         u= 0& \mbox{on}~\partial\Omega,
    \end{array} \right.
\end{equation}
where $g:[0,\infty) \rightarrow [0,\infty)$ is a continuous function satisfying 
minimal assumptions and
$a,b,r,p$ are constants such that $a>0$, $p,r >1$.
Observe that
problem  \eqref{P1} is  of  Kirchhoff type since the equation can be written as
$$-\text{div} \left( m(u, |\nabla u|_{2}^{2}) \nabla u\right) = a u + b u^p$$
just defining $m(u,|\nabla u|_{2}^{2}) := g(|\nabla u|_{2}^{2}) +r u^{r-1}.$
We point out here that, if we fix the value $|\nabla u|_{2}^{2}=t$ and define the map
for every $t\geq0$ 
$$m_{t}: s\in \mathbb R\mapsto m(s,t)\in \mathbb R$$
the map $m$ satisfies the  above conditions \eqref{m_{0}} and  \eqref{m_{1}}; moreover
 for suitable values of $r$ also \eqref{m_{2}} holds, falling down into the class of
$m$ permitted in \cite{JG}.

\medskip

In this paper we give sufficient conditions, depending on the parameters $a,b,r,p$
and on $g$ in such a way that problem \eqref{P1} admits a positive solution.
We have to say, however, that many other cases remain open.

Here are our main results.

\begin{taggedtheorem}{A}\label{th:A}
Assume that  
 $g:[0,\infty) \rightarrow [0,\infty)$ is a continuous function with $\inf_{s>0}g(s) >0$.
If   one of the following conditions is satisfied:
\begin{itemize}
\item[(i)] $b\leq 0$,\smallskip
\item[(ii)] $b>0$, $r=p$ and $b<\lambda_1$,\smallskip
\item[(iii)] $b>0$ and $r>p$,
\end{itemize}
then problem \eqref{P1} admits at least one  positive solution for each  $a>g(0)\lambda_1$.

Hereafter $\lambda_{1}$ denotes the first eigenvalue of the Laplacian in $\Omega$ under homogeneous Dirichlet boundary conditions.
\end{taggedtheorem}
\begin{taggedtheorem}{B}\label{th:B}
Assume that $g:[0,\infty) \rightarrow [0,\infty)$ is a bounded continuous function such that $g(0)>0$.
\begin{itemize}
    \item[(i)] If $r=p$ and $b>\lambda_1$ then problem \eqref{P1} admits at least one  positive solution for each $a< g(0)\lambda_1$. \smallskip
    \item[(ii)] If $b>0$, $1<p/r <(N+2)/(N-2)$ and $g(0)\lambda_1>\phi(s_0)$, then problem \eqref{P1}
     admits at least one  positive solution for each $a \in (\phi(s_0),g(0)\lambda_1)$. 
    
Here 
$$s_0 :=\left(\frac{b(p-1)}{\lambda_1(r-1)}\right)^{{1}/{(r-p)}}$$
is the maximum point of the function $\phi(s):=
\lambda_1s^{r-1}-bs^{p-1},  s\geq 0.$
    \end{itemize}
\end{taggedtheorem}

\begin{taggedtheorem}{C}\label{th:C}
Assume that $g:[0,\infty) \rightarrow [0,\infty)$ is a continuous and positive function. If $b=\lambda_1$ and $r=p<2$, then problem \eqref{P1} admits a  positive solution if, and only if, $a\in \lambda_1 R[g] :=\{\lambda_1g(s); s \geq 0\} \subset (0,\infty)$.
\end{taggedtheorem}

\medskip

The paper is organized as follows. In Section \ref{sec:Prelim} 
some preliminary results are given and a suitable parameter-dependent problem 
(see \eqref{Pa1}) is introduced in order to study the original problem \eqref{P1}.

Section \ref{sec:Change} is devoted to study  problem \eqref{Pa1}.
However to do that, by means of a  change of variables,  the problem is transformed into an equivalent
one which is studied with  bifurcation theory.

In Section \ref{sec:final} the proof of the main results is given.

\medskip

As a matter of notations,
we set for $k\in  \N, C^{k}_{0}(\overline\Omega):=C^{k}(\Omega)\cap C_{0}(\overline{\Omega})$ the set of  functions 
defined on $\overline \Omega$ which are of class $C^{k}$ in $\Omega$ and continuous up to the boundary satisfying
the homogeneous Dirichlet boundary condition. Hereafter $|\cdot|_{p}$ stands for the $L^{p}(\Omega)-$norm 
and the uniform norm will be denoted by $\|\cdot\|$.

\section{Preliminaries}\label{sec:Prelim}

Our approach to deal with  problem \eqref{P1}
 is to consider the following auxiliary problem
depending on the positive parameter $\lambda$:
\begin{equation}\label{Pa1}\tag{$P_{\lambda}$}
\left\{ \begin{array}{ll}
-\Delta(u/\lambda + u^r) = a u + b u^p& \mbox{in}~\Omega,  \\
u>0 & \mbox{in}~\Omega,\\
u= 0& \mbox{on}~\partial\Omega.
\end{array} \right.
\end{equation}
By a solution we mean a pair $(\lambda, u_{\lambda})\in (0,+\infty)\times C^{2}_{0}(\overline\Omega)$ satisfying \eqref{Pa1}
in the classical sense.

The idea is to find first 
an unbounded continuum, say $\Sigma_0$, of positive solution of (\ref{Pa1}) via bifurcation theory. Then, using the classical Bolzano Theorem, we will search for zeros of the map, 
	$$h: (\lambda,u)\in \Sigma_{0}\longmapsto  \frac{1}{\lambda} - g(|\nabla u|_2^2) \in \R$$
	which of course provides us a  solution of (\ref{P1}).

\medskip

In the following, given functions $A \in {C}^1(\overline{\Omega})$ and $B\in {C}(\overline{\Omega})$, satisfying
 $A(x) \geq A_0>0$ for $x\in \Omega$
and a suitable constant $A_{0}$, we will denote by
$$\sigma_1 [ -\text{div} (A(x) \nabla) + B(x)] $$
the principal  eigenvalue of the problem
\begin{equation*}
\left\{\begin{array}{ll}
    -\text{div} \left(A(x) \nabla u\right) + B(x)u =\lambda u &\mbox{in}~\Omega,  \\
     u=0&\mbox{on}~\partial\Omega. 
\end{array}  \right.
\end{equation*}
It is well-known (see for instance \cite{SMPbook,Djairo}) that this eigenvalue is increasing with respect to $A$ 
and $B$. When $A \equiv A_0$ is constant, we have $-\text{div} (A_0 \nabla) = -A_0 \Delta$, thus in this case we will 
write
$$\sigma_1 [ -A_0 \Delta + B(x)].$$
By simplicity, we set
$$\lambda_1:= \sigma_1 [ -\Delta].$$
Moreover, we will denote by $\varphi_1$ the positive eigenfunction associated to $\lambda_1$ with $\|\varphi_1\|=1.$

\medskip

The first result gives  necessary conditions on the parameters $a,b,r,p$ 
in order to get solutions to \eqref{Pa1}.
Here  the following function 
\begin{equation}\label{eq:phi}
\phi(s):=\lambda_1s^{r-1}-bs^{p-1}, \quad s\geq 0,
\end{equation}
plays an important role.
It easy to see that if $r>p$ (resp. $r<p$) $\phi$ is bounded below (resp. above) and the minimum (resp. maximum) 
 is attained at
\begin{equation}\label{eq:s0}
s_0 =\left(\frac{b(p-1)}{\lambda_1(r-1)}\right)^{\frac{1}{r-p}},
\end{equation}
and
$$
\phi(s_0)=\left(\frac{p-1}{\lambda_1}\right)^{\frac{p-1}{r-p}}\left(\frac{b}{r-1}\right)^{\frac{r-1}{r-p}}(p-r).
$$
With these considerations, we can show the next result.

\begin{lemma}\label{ne}
We have the following.
\begin{enumerate}
    \item[(a)] Suppose $b\leq0$.  Then (\ref{Pa1}) does not possess positive solution for $\lambda \leq \lambda_1/a$.\smallskip
    \item[(b)]Suppose $b>0$ and $r>p$. Then  (\ref{Pa1}) does not possess positive solution for $\lambda < \lambda_1/(a-\phi(s_0))$. Moreover, if $\lambda> \lambda_1/a$, then (\ref{Pa1}) does not possess positive solution $(\lambda,u_\lambda)$ with $\|u_\lambda\| < (b/\lambda_1)^{1/(r-p)}$. \smallskip
    \item[(c)]Suppose $b>0$ and $r=p$. If $b<\lambda_1$ (resp. $b>\lambda_1$), then (\ref{Pa1}) does not posses positive solution for $\lambda<\lambda_1/a$ (resp. $\lambda>\lambda_1/a$). Moreover, if $b = \lambda_1$ then (\ref{Pa1}) does not possess positive solution for $\lambda \neq \lambda_1/a$. \smallskip
    \item[(d)] Suppose $b>0$, $r<p$ and $a>\phi(s_0)$. Then (\ref{Pa1}) does not possess positive solution for $\lambda>\lambda_1/(a-\phi(s_0))$. Moreover, if $\lambda< \lambda_1/a$, then (\ref{Pa1}) does not possess positive solution $(\lambda,u_\lambda)$ with $\|u_\lambda\| < (b/\lambda_1)^{1/(r-p)}$.
\end{enumerate}
\end{lemma}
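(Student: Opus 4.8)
The plan is to test \eqref{Pa1} against the first eigenfunction $\varphi_1$ and integrate, reducing every nonexistence claim to an elementary sign analysis of the function $\phi$ from \eqref{eq:phi}. Concretely, I would multiply the equation by $\varphi_1$, integrate over $\Omega$, and use Green's second identity on the left-hand side. Because $u$, hence $w:=u/\lambda+u^r$, and $\varphi_1$ all vanish on $\partial\Omega$, while $-\Delta\varphi_1=\lambda_1\varphi_1$, the boundary terms cancel and one is left with $\lambda_1\int_\Omega(u/\lambda+u^r)\varphi_1\,dx=\int_\Omega(au+bu^p)\varphi_1\,dx$. Writing $\lambda_1u^r-bu^p=u\,\phi(u)$ and rearranging yields the single identity on which everything rests,
\begin{equation*}
\int_\Omega\Big[\tfrac{\lambda_1}{\lambda}-a+\phi(u)\Big]\,u\,\varphi_1\,dx=0.
\end{equation*}
The only technical point in this step is that a classical solution $u\in C^2_0(\overline\Omega)$ makes $w$ regular enough for Green's identity to be valid (this is routine, e.g.\ by first integrating by parts on interior subdomains exhausting $\Omega$ and using that $u,\varphi_1\to 0$ at the boundary).

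Since $u>0$ and $\varphi_1>0$ in $\Omega$, the weight $u\varphi_1$ is strictly positive throughout $\Omega$, so the identity cannot hold if the bracket $\tfrac{\lambda_1}{\lambda}-a+\phi(u)$ keeps a strict constant sign on $\Omega$. Thus each assertion reduces to finding a range of $\lambda$ forcing such a sign. To control $\phi(u)$ I would use two facts: the factorization $\phi(s)=s^{p-1}(\lambda_1s^{r-p}-b)$, which exhibits the unique positive zero $s_*=(b/\lambda_1)^{1/(r-p)}$ of $\phi$ and its sign on either side of $s_*$; and the global bounds $\phi\ge\phi(s_0)$ when $r>p$ and $\phi\le\phi(s_0)$ when $r<p$, already recorded before the statement.

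Then the four cases run mechanically. In (a) one has $\phi>0$ on $(0,\infty)$ and $\lambda\le\lambda_1/a$ gives $\tfrac{\lambda_1}{\lambda}-a\ge0$, so the bracket is strictly positive. In (c) one has $\phi(s)=(\lambda_1-b)s^{r-1}$, whose sign is that of $\lambda_1-b$; pairing it with the sign of $\tfrac{\lambda_1}{\lambda}-a$ determined by whether $\lambda$ is below or above $\lambda_1/a$ gives the two stated ranges, and when $b=\lambda_1$ the bracket degenerates to the constant $\tfrac{\lambda_1}{\lambda}-a$, forcing $\lambda=\lambda_1/a$. The first halves of (b) and (d) use the global bound: e.g.\ for (b), $\phi\ge\phi(s_0)$ and $\lambda<\lambda_1/(a-\phi(s_0))$ make $\tfrac{\lambda_1}{\lambda}-a+\phi(s_0)>0$, hence the bracket positive. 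I expect the only mildly delicate pieces to be the second halves of (b) and (d), where the sup-norm bound $\|u_\lambda\|<s_*$ is precisely what guarantees $0<u(x)<s_*$ at every point, so that $\phi(u(x))$ has a strict sign (negative for $r>p$, positive for $r<p$); combined with the sign of $\tfrac{\lambda_1}{\lambda}-a$ coming from $\lambda$ being on the appropriate side of $\lambda_1/a$, the bracket is again of one strict sign. There is no serious obstacle beyond bookkeeping: the whole lemma hinges on the one test-function identity, the only care being to match the threshold $(b/\lambda_1)^{1/(r-p)}$ with the sign change of $\phi$ and to justify the integration by parts.
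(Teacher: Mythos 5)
Your proposal is correct and follows essentially the same route as the paper: the paper's proof likewise multiplies \eqref{Pa1} by $\varphi_1$, integrates by parts to reach the identity $\int_\Omega u_\lambda\varphi_1\bigl[\lambda_1 u_\lambda^{r-1}-bu_\lambda^{p-1}-\bigl(a-\tfrac{\lambda_1}{\lambda}\bigr)\bigr]=0$, and settles each case by the same sign analysis of $\phi$ (the global bound through $\phi(s_0)$ for the first halves of (b) and (d), and the strict sign of $\phi$ below its zero $(b/\lambda_1)^{1/(r-p)}$ for the second halves).
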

\begin{proof}


To prove (a) and (b) we argue as follows. Let $\varphi_1$ be the positive eigenfunction associated to $\lambda_1$ with $\|\varphi_1\|=1$. Multiplying (\ref{Pa1}) by $\varphi_1$, integrating in $\Omega$ and applying the formula of integration by parts we get
\begin{eqnarray*}
\int_\Omega \nabla\left(\dis\frac{u_\lambda}{\lambda}+u_{\lambda}^r\right)  \nabla \varphi_1 &=& \int_\Omega (au_{\lambda} + bu_{\lambda}^p)\varphi_1\\
\int_\Omega \lambda_1\left(\dis\frac{u_\lambda}{\lambda}+u_{\lambda}^r\right)  \varphi_1 &=& \int_\Omega (au_{\lambda} + bu_{\lambda}^p)\varphi_1.
\end{eqnarray*}
Hence, $(\lambda, u_\lambda)$ satisfies
\begin{equation}\label{000}
\int_\Omega u_\lambda\varphi_1\left[\lambda_1 u_\lambda^{r-1}-bu_\lambda^{p-1} - \left(a- \frac{\lambda_1}{\lambda} \right) \right]=0.
\end{equation}
If $b\leq 0$ the above equality implies that $a>\lambda_1/\lambda$. This implies (a).
 
On the other hand, assuming $b>0$ since  $r>p$, the function
$$\phi(s)= \lambda_1 s^{r-1} - b s^{p-1}, \quad s \geq 0.$$
is bounded below and by a direct calculation, $\min_{0\leq s < \infty}\phi(s)$ is negative and it is attained at
$$s_0 =\left(\frac{b(p-1)}{\lambda_1(r-1)}\right)^{\frac{1}{r-p}}.$$
Therefore, for all $\lambda>0$ such that
$$
\lambda < \frac{\lambda_1}{a-\phi(s_0)},
$$
we have
$$\phi(s_0)> a - \frac{\lambda_1}{\lambda} \quad \forall \lambda \in (0,\lambda_1/(a-\phi(s_0))].$$
Consequently, 
$$\lambda_1 s^{r-1} - b s^{p-1} - \left(a - \frac{\lambda_1}{\lambda} \right)> 0 \quad \forall s \geq 0, ~\lambda \in (0,\lambda_1/(a-\phi(s_0))]$$
and (\ref{000}) cannot be satisfied for any positive function $u_\lambda$, showing that $\lambda\geq \lambda_1/(a-\phi(s_0))$ is a necessary condition for the existence of positive solution of (\ref{Pa1}) with $b>0$ and $r>p$.\\
Now, suppose that $(\lambda,u_\lambda)$ is a positive solution of (\ref{Pa1}) with $\lambda> \lambda_1/a$. Then, it verifies (\ref{000}) and, hence,
\begin{equation}\label{0001}
\int_\Omega u_\lambda\varphi_1(\lambda_1 u_\lambda^{r-1}-bu_\lambda^{p-1}) = \left(a- \frac{\lambda_1}{\lambda} \right)\int_\Omega u_\lambda \varphi_1>0.
\end{equation}
If $\|u_\lambda\|<(b/\lambda_1)^{1/(r-p)}$, then
$$ \phi(u_\lambda(x)) = \lambda_1 u_\lambda(x)^{r-1}-bu_\lambda(x)^{p-1} <0 \quad \forall x \in \Omega.$$
and, therefore,
$$
\int_\Omega u_\lambda\varphi_1(\lambda_1 u_\lambda^{r-1}-bu_\lambda^{p-1}) < 0,
$$
which is a contradiction with (\ref{0001}). 

\medskip

In the case (c), since $r=p$, (\ref{0001}) is equivalent to
$$
(\lambda_1-b)\int_\Omega u_\lambda^r\varphi_1 = \left(a- \frac{\lambda_1}{\lambda} \right)\int_\Omega u_\lambda \varphi_1
$$
whence we easily infer the result.

\medskip

The proof of paragraph (d) is similar to that of (b).
\end{proof}

In the next Section we will analyze the auxiliary problem \eqref{Pa1}.

\section{Study of the auxiliary problem (\ref{Pa1})} \label{sec:Change}
To study (\ref{Pa1}) we introduce the following change of variables depending on the parameter $\lambda$:
\begin{equation}\label{eq:cambio}
w:=\frac{u}{\lambda} + u^r.
\end{equation}

Then, if we define the function $I_{\lambda}:s\in[0,+\infty)\mapsto \dis\frac{s}{\lambda} + s^r\in[0,+\infty)$, which 
is a smooth diffeomorphism, and denote its 
inverse with $q_{\lambda}$, we have
$$w = I_{\lambda}(u) =  \frac{u}{\lambda} + u^r \Longleftrightarrow q_{\lambda}(w)=u.$$

The first result of this section collects some important properties related to the map $q_{\lambda}$.
\begin{lemma}\label{lem:q}
The map $\lambda \in (0,\infty) \mapsto q_{\lambda}(s)$ is continuous and increasing, for all $s >0$. 

Now let $\lambda>0$ be fixed. Then the map $s\in(0,+\infty)\mapsto\dis\frac{q_{\lambda}(s)}{s}\in(0,+\infty)$
	has the following properties:
	\begin{enumerate}[label=(q\arabic*),ref=q\arabic*,start=1]
		\item\label{q1} it is decreasing and $q_{\lambda}(s)/s\leq \lambda$, \smallskip
		\item\label{q2}
		$\displaystyle \lim_{s \rightarrow 0^{+}} \dis\frac{q_{\lambda}(s)}{s} = \lambda,$ 
		\item\label{q3} $\displaystyle \lim_{s\to+\infty}\dis\frac{q_{\lambda}(s)}{s} =0. $
	\end{enumerate}
	On the other hand, the map $s\in(0,+\infty)\mapsto\dis\frac{q_{\lambda}(s)^{p}}{s}\in(0,+\infty)$
	satisfies:
	\begin{enumerate}[label=(q\arabic*),ref=q\arabic*,start=4]
		\item\label{q4} 
		$\displaystyle \lim_{s \rightarrow 0^{+}}\dis \frac{q_{\lambda}(s)^p}{s} = 0$,\smallskip
		\item\label{q5} it is  increasing if $p \geq r$, \smallskip
		\item \label{q6} $\displaystyle \lim_{s\to+\infty} \dis\frac{q_{\lambda}(s)^{p}}{s}=
		\begin{cases}
		+\infty &\mbox{ if } r<p, \\
		0 &\mbox{ if }r>p, \\
		1 &\mbox{ if } r=p.
		\end{cases}
		$
	\end{enumerate}
\end{lemma}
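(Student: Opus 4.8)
The plan is to reduce every assertion to elementary one-variable calculus by means of the substitution $u = q_\lambda(s)$, which turns the defining relation $w = I_\lambda(u) = u/\lambda + u^r$ into $s = u/\lambda + u^r$. Since $I_\lambda$ is a strictly increasing smooth diffeomorphism of $[0,\infty)$ with $I_\lambda(0)=0$, its inverse $q_\lambda$ is itself strictly increasing with $q_\lambda(s)\to 0$ as $s\to 0^{+}$ and $q_\lambda(s)\to\infty$ as $s\to\infty$; these three facts about $u = q_\lambda(s)$ will drive all of the limit and monotonicity statements.

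For the dependence on $\lambda$, I would fix $s>0$ and observe that for each fixed $u>0$ the quantity $I_\lambda(u) = u/\lambda + u^r$ is strictly decreasing in $\lambda$. Hence, if $\lambda_1 < \lambda_2$, then
$$I_{\lambda_2}(q_{\lambda_1}(s)) < I_{\lambda_1}(q_{\lambda_1}(s)) = s = I_{\lambda_2}(q_{\lambda_2}(s)),$$
and since $I_{\lambda_2}$ is increasing this forces $q_{\lambda_1}(s) < q_{\lambda_2}(s)$, giving monotonicity. Continuity in $\lambda$ then follows from the implicit function theorem applied to $F(\lambda,u) = u/\lambda + u^r - s$, whose partial derivative $\partial_u F = 1/\lambda + r u^{r-1}$ never vanishes (the same computation, via $\partial_\lambda u = -\partial_\lambda F/\partial_u F$, reconfirms monotonicity).

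For the map $s\mapsto q_\lambda(s)/s$ I would substitute to obtain the closed form
$$\frac{q_\lambda(s)}{s} = \frac{u}{u/\lambda + u^r} = \frac{1}{1/\lambda + u^{r-1}}, \qquad u = q_\lambda(s).$$
Because $r>1$ and $u=q_\lambda(s)$ increases from $0$ to $\infty$ as $s$ ranges over $(0,\infty)$, the denominator $1/\lambda + u^{r-1}$ increases from $1/\lambda$ to $\infty$; this immediately yields that $q_\lambda(s)/s$ is decreasing and bounded above by $\lambda$, i.e. \eqref{q1}, that its limit as $s\to 0^{+}$ is $\lambda$, i.e. \eqref{q2}, and that its limit as $s\to\infty$ is $0$, i.e. \eqref{q3}. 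The analogous closed form
$$\frac{q_\lambda(s)^{p}}{s} = \frac{u^{p-1}}{1/\lambda + u^{r-1}} =: \psi(u)$$
handles \eqref{q4} and \eqref{q6} at once: since $p>1$, as $u\to 0^{+}$ the numerator tends to $0$ while the denominator tends to $1/\lambda$, giving \eqref{q4}; dividing through by $u^{r-1}$ and letting $u\to\infty$ gives $\psi(u) = u^{p-r}/(u^{1-r}/\lambda + 1)$, whose limit is $+\infty$, $0$, or $1$ according as $r<p$, $r>p$, or $r=p$, which is \eqref{q6}.

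The one computation that needs genuine work is the monotonicity \eqref{q5}. I would differentiate $\psi(u) = u^{p-1}/(1/\lambda + u^{r-1})$ and find that the sign of $\psi'(u)$ is governed by the numerator
$$\frac{p-1}{\lambda}\,u^{p-2} + (p-r)\,u^{p+r-3}.$$
Since $p>1$ the first term is strictly positive, and when $p\geq r$ the coefficient $p-r\geq 0$ makes the second term nonnegative, so $\psi'(u)>0$ and $\psi$ is increasing; as $s\mapsto u = q_\lambda(s)$ is increasing, so is $s\mapsto q_\lambda(s)^{p}/s$. I expect this derivative sign analysis to be the only nontrivial point, everything else reducing to the monotone behavior of the explicit expressions in $u^{r-1}$ displayed above.
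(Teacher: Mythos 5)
Your proposal is correct and follows essentially the same route as the paper's proof: both rest on the closed-form identities $q_\lambda(s)/s = 1/\bigl(1/\lambda + q_\lambda(s)^{r-1}\bigr)$ and $q_\lambda(s)^p/s = q_\lambda(s)^{p-1}/\bigl(1/\lambda + q_\lambda(s)^{r-1}\bigr)$ together with the comparison $I_{\lambda''}(q_{\lambda'}(s)) < s = I_{\lambda''}(q_{\lambda''}(s))$ for the monotonicity in $\lambda$. The only cosmetic differences are that you invoke the implicit function theorem for continuity in $\lambda$ (the paper simply appeals to the definition of $q_\lambda$) and that you verify (q5) by a derivative sign computation, whereas the paper reads the same monotonicity directly off the identity $q_\lambda(s)^p/s = 1/\bigl(q_\lambda(s)^{1-p}/\lambda + q_\lambda(s)^{r-p}\bigr)$, whose denominator is decreasing in $q_\lambda(s)$ when $p \geq r$.
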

\begin{proof}
The continuity of the map $\lambda \in (0,\infty) \mapsto q_\lambda(s)$,  $s > 0$ follows by
the  definition of $q_\lambda$. To prove that it is increasing, that is,
\begin{equation}\label{eq:qlcrescente}
\lambda'< \lambda'' \Longrightarrow q_{\lambda'}(s)< q_{\lambda''}(s),
\end{equation}
we proceed as follows. For every $s>0$, we have:
\begin{eqnarray*}\label{eq:}
I_{\lambda''}(q_{\lambda'}(s)) &=& \frac{q_{\lambda'}(s)}{\lambda''} + q_{\lambda'}(s)^{r} \\
&<&  \frac{q_{\lambda'}(s)}{\lambda'} + q_{\lambda'}(s)^{r} \\
&=& I_{\lambda'}(q_{\lambda'}(s))\\ &=& s
\end{eqnarray*}
from which \eqref{eq:qlcrescente} easily follows.

Now, let $\lambda>0$ be fixed. 	Since $q_{\lambda} $ is the inverse function of $I_{\lambda}$,  it is increasing and  verifies  
\begin{equation*}\label{s}
s=I_{\lambda}(q_{\lambda}(s)) =  \frac{q_{\lambda}(s)}{\lambda} + q_{\lambda}(s)^r 
\end{equation*}
so that
$$ \frac{q_{\lambda}(s)}{s} = \frac{1}{1/\lambda+q_{\lambda}(s)^{r-1}} \leq \lambda,
$$
which proves \eqref{q1}.
	
Furthermore, since $q_{\lambda}(0)= 0$ and $r>1$ it follows that
$$
\lim_{s \rightarrow 0^{+} } \frac{q_{\lambda}(s)}{s} =\lim_{s \rightarrow 0^{+}} \frac{1}{1/\lambda+q_{\lambda}(s)^{r-1}}  = \lambda\quad \textrm{ and }\quad \lim_{s\to+\infty}\frac{q_{\lambda}(s)}{s}=\lim_{s\to+\infty} \frac{1}{1/\lambda+q_{\lambda}(s)^{r-1}} =0
	$$
	which gives \eqref{q2} and \eqref{q3}.
	
	Moreover, being $p>1$,
	$$\lim_{s \rightarrow 0^{+}} \frac{q_{\lambda}(s)^p}{s} =  \lim_{s \rightarrow 0^{+}} \frac{q_{\lambda}(s)}{s} q_{\lambda}
	(s)^{p-1} = 0,$$
	proving \eqref{q4}.
	
	Finally, the identity
	$$ \frac{q_{\lambda}(s)^p}{s} =  \frac{1}{q_{\lambda}(s)^{1-p}/\lambda+ q_{\lambda}(s)^{r-p}}$$
	gives \eqref{q5} and \eqref{q6}.
\end{proof}

	\begin{remark}\label{limuniforme}
We point out that, in the case $r>p>1$, for each $\overline{\lambda}>0$, the limit $\lim_{s \rightarrow \infty} q_\lambda(s)^p/s = 0$,  is uniform in $\lambda \in [\overline{\lambda},\infty)$. Indeed, for each $\delta>0$, to obtain 
	$$ 
	\frac{q_\lambda(s)^p}{s} \leq \delta \left(\Longleftrightarrow s \leq I_\lambda((\delta s)^{1/p}) = \frac{(\delta s)^{1/p}}{\lambda}+ (\delta s)^{r/p} \right)
	$$
	it is sufficient to choose $s>0$ (independent on $\lambda$) such that
	$$
	1 \leq \delta^r s^{(r/p)-1} \left(\leq \frac{\delta ^{1/p} s^{(1/p)-1}}{\lambda}+\delta^r s^{(r/p)-1}\right).
	$$
    It should be noted that, by a similar argument, the limit  $\lim_{s \rightarrow \infty} q_\lambda(s)/s = 0$ is also uniform in $\lambda \in [\overline{\lambda}, \infty).$     
    
    We will use these properties later to get a priori bounds of the positive solutions of (\ref{Pa2}), uniform with respect to $\lambda \in [\overline{\lambda},\infty)$.
	\end{remark}

\medskip

Thus, under the above change of variable \eqref{eq:cambio},  problem \eqref{Pa1} is equivalent to
\begin{equation}\label{Pa2}
    \left\{ \begin{array}{ll}
         -\Delta w = a q_{\lambda}(w) + b q_{\lambda}(w)^p& \mbox{in }\Omega,  \\
         w>0& \mbox{in }\Omega, \\
         w= 0& \mbox{on } \partial\Omega,
    \end{array} \right.
\end{equation}
in the sense that  $(\lambda,u_{\lambda})$
 is a  solution of 
\eqref{Pa1} if, and only if, $(\lambda, w_{\lambda}):=(\lambda,  u_{\lambda}/\lambda+u_{\lambda}^r)$ 
is a    solution of \eqref{Pa2}.

We observe explicitly that the map 
$$f: (\lambda, s)\in (0,+\infty)\times[0,+\infty)\longmapsto a q_{\lambda}(s) +bq_{\lambda}(s)^{p}\in \mathbb R$$
is locally Lipschitz.
Note that for every $\lambda>0$ there is always the trivial solution $w\equiv 0$ 
to \eqref{Pa2}.

\begin{remark}\label{rem:necesariaw}
	In virtue of Lemma  	\ref{ne} we have the following necessary conditions
	for the existence of solutions to \eqref{Pa2}. 
	\begin{enumerate}
    \item[(a)] Suppose $b\leq0$.  Then \eqref{Pa2} does not possess positive solution for $\lambda \leq \lambda_1/a$. \smallskip
    \item[(b)] Suppose $b>0$ and $r>p$. Then  \eqref{Pa2} does not possess positive solution for $\lambda < \lambda_1/(a-\phi(s_0))$. Moreover, If $\lambda> \lambda_1/a$, then \eqref{Pa2} does not possess positive solution $(\lambda,w_\lambda)$ with $$\|w_\lambda\| < \frac{a}{\lambda_1} \left(\frac{b}{\lambda_1}\right)^{1/(r-p)} + \left(\frac{b}{\lambda_1}\right)^{r/(r-p)}.$$ \smallskip
    \item[(c)]Suppose $b>0$ and $r=p$. If $b<\lambda_1$ (resp. $b>\lambda_1$), then \eqref{Pa2} does not posses positive solution for $\lambda<\lambda_1/a$ (resp. $\lambda>\lambda_1/a$). Moreover, if $b = \lambda_1$ then \eqref{Pa2} does not possess positive solution for $\lambda \neq \lambda_1/a$. \smallskip
    \item[(d)] Suppose $b>0$, $r<p$ and $a>\phi(s_0)$, then \eqref{Pa2} does not possess positive solution for $\lambda>\lambda_1/(a-\phi(s_0))$. Moreover, if $\lambda< \lambda_1/a$, then \eqref{Pa2} does not possess positive solution $(\lambda,w_\lambda)$ with 
    $$\|w_\lambda\| < \frac{a}{\lambda_1} \left(\frac{b}{\lambda_1}\right)^{1/(r-p)} + \left(\frac{b}{\lambda_1}\right)^{r/(r-p)}.$$
\end{enumerate}

\end{remark}

Problem \eqref{Pa2} will be studied with the help of bifurcation theory.

\begin{definition}
We say that $(\lambda_0,0)$ is a bifurcation point from the trivial solution of the equation in \eqref{Pa2} if there exists a sequence $(\lambda_n,w_n)$ of non-trivial solutions of \eqref{Pa2} such that
$$ (\lambda_n,w_n) \rightarrow (\lambda_0,0) \quad \mbox{as}~n \rightarrow \infty.$$
\end{definition}

Now, we will 
obtain an unbounded continuum of positive solutions of 
(\ref{Pa2}) emanating from the trivial solution  at $\lambda = \lambda_1/a$ and, hence, we prove a result of existence of positive solution of (\ref{Pa2}) and 
consequently of (\ref{Pa1}). 

To this end, consider the map $\mathfrak{F}:(0,\infty)\times {C}_0^1(\overline{\Omega}) \rightarrow {C}_0^1(\overline{\Omega})$ defined  by
$$
\mathfrak{F}(\lambda, w) = w - (-\Delta)^{-1}[aq_{\lambda}(w) +b q_{\lambda}(w)^p ],
$$
where $(-\Delta)^{-1}$ is the inverse of the Laplacian operator under homogeneous Dirichlet boundary conditions. 
The operator $\mathfrak{F}$ is of class $\mathcal{C}^1$ and  equation in (\ref{Pa2}) 
(including the boundary condition) can be written in the form
$$\mathfrak{F}(\lambda, w) =0.$$
Moreover, since we are interested only in positive solutions  with $\lambda>0$, we can consider any $\mathcal{C}^1$-extension of $\mathfrak{F}$ on $\R\times C_0^1(\overline{\Omega})\times C_0^1(\overline{\Omega})$. Thus, still denoting it by $\mathfrak{F}$, we have:

\begin{proposition}\label{bifur}
The value $\lambda=\lambda_1/a$ is the unique  bifurcation point  from the trivial solution to \eqref{Pa2} with $\lambda>0$.
Moreover, from $\lambda_{1}/a$  emanates  an unbounded continuum  in $(0,\infty)\times {C}_0^1(\overline{\Omega})$ of positive solutions $\widehat\Sigma_{0}$ of  (\ref{Pa2}). 
\end{proposition}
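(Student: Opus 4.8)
The plan is to apply bifurcation theory to the $\mathcal{C}^1$ map $\mathfrak{F}$, exploiting that near $w=0$ the nonlinearity reduces to its linear part. First I would linearize along the trivial branch. Since $q_{\lambda}(0)=0$ and, because $I_{\lambda}'(0)=1/\lambda$, also $q_{\lambda}'(0)=\lambda$, while the term $b\,q_{\lambda}(w)^p$ contributes nothing to the $w$-derivative at $w=0$ (this is exactly the content of \eqref{q4}, valid because $p>1$), one obtains
$$D_w\mathfrak{F}(\lambda,0)=I-a\lambda(-\Delta)^{-1}.$$
As $(-\Delta)^{-1}$ is compact, this is a compact perturbation of the identity, and it fails to be invertible precisely when $a\lambda$ is a Dirichlet eigenvalue of $-\Delta$. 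The principal eigenvalue $\lambda_1$ is simple with positive eigenfunction $\varphi_1$, so the kernel of $D_w\mathfrak{F}(\lambda_1/a,0)$ is spanned by $\varphi_1$.

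Next I would verify the Crandall--Rabinowitz transversality condition at $(\lambda_1/a,0)$. From $(-\Delta)^{-1}\varphi_1=\varphi_1/\lambda_1$ we get $D_{\lambda}D_w\mathfrak{F}(\lambda_1/a,0)\varphi_1=-a(-\Delta)^{-1}\varphi_1=-(a/\lambda_1)\varphi_1$, which is not in the range of $D_w\mathfrak{F}(\lambda_1/a,0)$ because $\varphi_1$ spans the cokernel. Hence $\lambda_1/a$ is a bifurcation point and, locally, the non-trivial solutions form a curve $w=s\varphi_1+o(s)$ that is positive for small $s>0$. Since $\mathfrak{F}=I-(\text{compact})$ and $\lambda_1$ has algebraic multiplicity one (odd), the Leray--Schauder index changes across $\lambda_1/a$, so Rabinowitz's global bifurcation theorem produces a continuum $\widehat\Sigma_0$ of non-trivial solutions issuing from $(\lambda_1/a,0)$ which either is unbounded or meets another bifurcation point $(\lambda_k/a,0)$.

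To obtain uniqueness of the bifurcation point together with positivity I would argue by normalization. If $(\lambda_n,w_n)\to(\lambda_0,0)$ with the $w_n$ positive solutions, then setting $v_n=w_n/\|w_n\|_{C^1_0}$ and dividing the equation by $\|w_n\|_{C^1_0}$ yields
$$-\Delta v_n=\left[a\,\frac{q_{\lambda_n}(w_n)}{w_n}+b\,\frac{q_{\lambda_n}(w_n)^p}{w_n}\right]v_n.$$
As $w_n\to 0$ uniformly, \eqref{q2}, \eqref{q4} and the continuity of $\lambda\mapsto q_{\lambda}$ force the bracket to converge uniformly to $a\lambda_0$; by compactness of $(-\Delta)^{-1}$ a subsequence of $v_n$ converges in $C^1_0(\overline{\Omega})$ to some $v\geq 0$ with $\|v\|_{C^1_0}=1$ solving $-\Delta v=a\lambda_0 v$. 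A nonnegative Dirichlet eigenfunction must be the principal one, so $a\lambda_0=\lambda_1$, i.e. $\lambda_0=\lambda_1/a$. This proves that $\lambda_1/a$ is the only bifurcation point and, in particular, that no $(\lambda_k/a,0)$ with $k\geq 2$ can be a limit of positive solutions.

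Finally, I would select from $\widehat\Sigma_0$ the subcontinuum carrying the local positive branch; the maximum principle and Hopf's lemma keep it in the positive cone as long as it avoids the trivial solution, while the normalization argument forbids it from returning to $(\lambda_k/a,0)$, $k\geq 2$. The Rabinowitz alternative then leaves only that $\widehat\Sigma_0$ is unbounded, and the necessary conditions of Remark \ref{rem:necesariaw} keep it in the region $\lambda>0$. I expect the main obstacle to be precisely this last step---confining the global continuum to the positive cone and excluding the return to a higher bifurcation point---for which the uniform limits recorded in Remark \ref{limuniforme} and a version of the Rabinowitz alternative adapted to positive solutions are the key tools.
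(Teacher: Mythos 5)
Your local analysis matches the paper's: the linearization $\partial_w\mathfrak{F}(\lambda,0)=I-a\lambda(-\Delta)^{-1}$, the identification of the kernel $\mbox{span}[\varphi_1]$ at $\lambda=\lambda_1/a$, and the Crandall--Rabinowitz transversality computation are exactly the paper's steps; moreover your normalization argument for the uniqueness of the bifurcation point (a limit of normalized positive solutions is a nonnegative eigenfunction, hence the principal one) is correct and, as written, even more solid than the paper's literal justification of that point.

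The genuine gap is in your final paragraph, and you have half-acknowledged it yourself. The classical Rabinowitz alternative applies to the full continuum $C$ of nontrivial solutions of the ($C^1$-extended) equation, not to the subcontinuum ``carrying the local positive branch.'' After you exclude returns to $(\lambda_k/a,0)$, $k\geq 2$, by normalization, two scenarios remain, not one: either the positive subcontinuum is unbounded, or it is bounded and $C$ realizes the Rabinowitz alternative through solutions that are not positive --- for instance, the positive branch loops back to $(\lambda_1/a,0)$ while $C$ becomes unbounded (or reaches another eigenvalue) along the other side $s\varphi_1+o(s)$, $s<0$, of the local Crandall--Rabinowitz curve. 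The maximum principle and Hopf's lemma show that positive solutions remain positive until they meet the trivial branch, but they cannot rule out this loop; this is precisely the flaw Dancer identified in Rabinowitz's original unilateral claims. The paper closes this gap by invoking L\'opez-G\'omez's unilateral bifurcation theorem \cite[Theorem 6.4.3]{Bifbook}, whose statement carries a \emph{third} alternative --- $\widehat\Sigma_0$ contains a point $(\lambda,w)\in\R\times(Y\setminus\{0\})$, $Y$ a complement of $\mbox{span}[\varphi_1]$ --- and then kills it by choosing $Y=R[\partial_w\mathfrak{F}(\lambda_1/a,0)]=R[I-\lambda_1(-\Delta)^{-1}]$ and observing that a positive function cannot belong to this range. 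Your proof needs this theorem (or Dancer's unilateral theorem) together with that range argument; the ``version of the Rabinowitz alternative adapted to positive solutions'' you defer to is the missing ingredient, not a routine adaptation.
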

\begin{proof}
Thanks  \eqref{q2} and \eqref{q4} of  Lemma \ref{lem:q}, 
we have
$$\lim_{s \rightarrow 0}\frac{aq_{\lambda}(s) + bq_{\lambda}(s)^p}{s} = a\lambda. $$
Thus, the linearization of $\mathfrak{F}$ at $(\lambda,0),\lambda>0$, is given by
$$ \partial_w\mathfrak{F}(\lambda,0) = I_{\mathcal{C}^1_{0}(\overline \Omega)} - a\lambda(-\Delta)^{-1}.$$
Consequently, $\partial_w\mathfrak{F}(\lambda,0)$ is a Fredholm operator of index zero, analytic in $\lambda$. Moreover, its kernel verifies
$$
N[\partial_w\mathfrak{F}(\lambda_1/a,0)] = \mbox{span}[\varphi_1], 
$$ 
where $\varphi_1 > 0$ stands, as usual, for the principal eigenfunction associated to $\lambda_1$ and satisfying 
$\|\varphi_1\|=1$. Furthermore, by a standard argument,
\begin{equation}\label{rrr}
\partial_\lambda \partial_w \mathfrak{F}(\lambda_1/a,0)\varphi_1 \not\in R[\partial_w \mathfrak{F}(\lambda_1/a,0)], 
\end{equation}
that is, $\lambda = \lambda_1/a$ is a 1-transversal eigenvalue of the family $\partial_w \mathfrak{F}(\lambda,0)$, which is the transversality condition stated in \cite{RabC}. Therefore, we can apply the unilateral bifurcation theorem
(see  \cite[Theorem 6.4.3]{Bifbook})  to conclude the existence of  a continuum $\widehat{\Sigma}_0$ of positive solution of (\ref{Pa2}) satisfying one of the following non-excluding options: either
\begin{enumerate}
    \item[1.] $\widehat{\Sigma}_0$ is unbounded in $\R \times {C}_0^1(\overline{\Omega})$.
    \item[2.] There exists $\lambda_* \in \R$ such that $\lambda_* \neq \lambda_1/a$ and $(\lambda_*,0) \in \Sigma_0.$
    \item[3.] $\widehat{\Sigma}_0$ contains a point $(\lambda,w) \in \R \times (Y \setminus \{0\})$, where $Y$ is the complement of $N[\partial_w\mathfrak{F}(\lambda_1/a,0)]$ in ${C}_0^1(\overline{\Omega})$.
\end{enumerate}
Let us prove that 2. and 3. cannot be satisfied. 

If 2. occurs, then $\lambda_*$ is a bifurcation point of (\ref{Pa2}) from the trivial solution. Since the  bifurcation points of the trivial solution of (\ref{Pa2}) are, necessarily, simple eigenvalues of $-\Delta$ in $\Omega$ under homogeneous Dirichlet boundary conditions divided by $a$ and the only simple eigenvalue is $\lambda_1/a$, we must have $\lambda_* = \lambda_1/a$, which is a contradiction.

Suppose now that 3. occurs. Note that we can take $Y= R[\partial_w\mathfrak{F}(\lambda_1/a,0)]$. Indeed, since  $\partial_w\mathfrak{F}(\lambda_1/a,0)$ is a Fredholm operator of index zero, we have
$$\mbox{codim} R[\partial_w\mathfrak{F}(\lambda_1/a,0)] = \mbox{dim} N[\partial_w\mathfrak{F}(\lambda_1/a,0)] =1$$
and, hence, the complement of $R[\partial_w\mathfrak{F}(\lambda_1/a,0)]$ on ${C}_0^1(\overline{\Omega})$ is one dimensional. From the transversality condition (\ref{rrr}), we obtain
$$R[\partial_w\mathfrak{F}(\lambda_1/a,0)] \oplus N[\partial_w\mathfrak{F}(\lambda_1/a,0)] ={C}_0^1(\overline{\Omega}).$$
Now, observe that the function $w$ given in paragraph $3.$ is positive and 
$$w \in Y = R[\partial_w\mathfrak{F}(\lambda_1/a,0)] = R[I- \lambda_1(-\Delta)^{-1}]$$
which is impossible. Then $\widehat\Sigma_0$ is unbounded in $\R \times {C}_0^1(\overline{\Omega})$.
\end{proof}

With the aim of obtaining a priori estimates
for the positive solutions of  (\ref{Pa2}), we
first recall the following result, which applies to the following general problem
\begin{equation}\label{eq:Games}
\left\{ \begin{array}{ll}
-\Delta u =  f(\lambda, x,u)& \mbox{in }\Omega,  \\
u= 0& \mbox{on } \partial\Omega.
\end{array} \right.
\end{equation}
A solution for this problem is a pair $(\lambda,u)$. We denote with
\eqref{eq:Games}$_{\lambda}$ the above problem with $\lambda$ fixed.

\begin{theorem}(See \cite[Theorem 2.2.]{Gamez})\label{th:Gamez}
	Assume that $f$ is locally Lipschitz. Suppose that $I\subset \mathbb R$,
	is an interval  and let $\Sigma \subset  I \times  C^{2}_{0}(\overline\Omega)$ 
	be a connected set of solutions of 
	\eqref{eq:Games}. Consider a continuous map $\overline U: I\to C^{2}_{0}(\overline\Omega)$ 
	such that $\overline U(\lambda) $ is a super-solution of \eqref{eq:Games}$_{\lambda}$ 
	for every $\lambda\in I$,
	but not a solution.
	If $u_{0}\leq  (\not\equiv)\overline U(\lambda_{0})$ for some $(\lambda_{0}, u_{0})\in\Sigma$,
	then $u<\overline U(\lambda)$ in $\Omega$, for all $(\lambda,u)\in \Sigma$.
\end{theorem}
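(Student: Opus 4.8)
The plan is to prove this as a standard \emph{sweeping} continuation argument, combining the strong maximum principle with the fact that the positive cone of $C^1_0(\overline\Omega)$ has nonempty interior. Throughout, I would write $z_\lambda := \overline U(\lambda) - u$ for a solution $(\lambda, u) \in \Sigma$, and note at the outset that since $\overline U(\lambda)$ is a super-solution but \emph{not} a solution while $u$ \emph{is} a solution, one automatically has $u \not\equiv \overline U(\lambda)$ for every $(\lambda,u) \in \Sigma$; hence the qualifier $(\not\equiv)$ in the hypothesis is in fact automatic along the whole branch and need only be invoked at the initial point.

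First I would establish the key \emph{touching implies strict separation} step. Suppose $(\lambda,u)\in\Sigma$ satisfies $u \leq \overline U(\lambda)$. Then $z_\lambda \geq 0$, $z_\lambda \not\equiv 0$, $z_\lambda = 0$ on $\partial\Omega$, and subtracting the super-solution inequality from the equation \eqref{eq:Games}$_\lambda$ gives $-\Delta z_\lambda \geq f(\lambda,x,\overline U(\lambda)) - f(\lambda,x,u)$. Since $f$ is locally Lipschitz and the two functions range in a fixed compact set, there is $L \geq 0$ with $f(\lambda,x,\overline U(\lambda)) - f(\lambda,x,u) \geq -L\,z_\lambda$, whence $(-\Delta + L)z_\lambda \geq 0$ with $z_\lambda \geq 0$ and $z_\lambda \not\equiv 0$. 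The strong maximum principle then yields $z_\lambda > 0$ in $\Omega$, and Hopf's boundary lemma gives $\partial_\nu z_\lambda < 0$ on $\partial\Omega$. In other words $z_\lambda$ lies in the interior $\mathcal{P}^\circ$ of the positive cone of $C^1_0(\overline\Omega)$.

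With this in hand I would run the connectedness argument. Set $\Sigma^- := \{(\lambda,u)\in\Sigma : u \leq \overline U(\lambda)\text{ in }\Omega\}$, which is nonempty since it contains $(\lambda_0,u_0)$. It is relatively closed in $\Sigma$: if $(\lambda_n,u_n)\in\Sigma^-$ converges to $(\lambda,u)\in\Sigma$ in $I\times C^2_0(\overline\Omega)$, then passing to the limit in $u_n \leq \overline U(\lambda_n)$ and using the continuity of $\overline U$ gives $u \leq \overline U(\lambda)$. It is also relatively open: given $(\lambda,u)\in\Sigma^-$, the previous step shows $z_\lambda = \overline U(\lambda) - u \in \mathcal{P}^\circ$; since $\mathcal{P}^\circ$ is open in $C^1_0(\overline\Omega)$ and the map $(\lambda,u)\mapsto \overline U(\lambda) - u$ is continuous from $\Sigma$ into $C^1_0(\overline\Omega)$ (using continuity of $\overline U$ and the embedding $C^2_0(\overline\Omega) \hookrightarrow C^1_0(\overline\Omega)$), a whole $\Sigma$-neighborhood of $(\lambda,u)$ is carried into $\mathcal{P}^\circ$, and in particular stays in $\Sigma^-$. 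Since $\Sigma$ is connected, $\Sigma^- = \Sigma$, and applying the separation step once more at each point yields $u < \overline U(\lambda)$ in $\Omega$ for all $(\lambda,u)\in\Sigma$.

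The main obstacle, and the reason the conclusion is a strict inequality rather than merely $\leq$, is the openness of $\Sigma^-$: it is not enough to propagate the weak inequality $u \leq \overline U(\lambda)$, since a limit of sub-orderings could a priori develop an interior or boundary contact. The device that overcomes this is precisely the upgrade from $\leq$ to the strong ordering $z_\lambda \in \mathcal{P}^\circ$ via the strong maximum principle and Hopf's lemma, combined with the openness of $\mathcal{P}^\circ$ in the $C^1$ topology. This is exactly why the argument must be carried out in $C^1_0(\overline\Omega)$ rather than in $C(\overline\Omega)$ or an $L^p$ space, where the positive cone has empty interior and the openness step would collapse.
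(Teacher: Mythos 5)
The paper does not actually prove this theorem: it is quoted from \cite[Theorem 2.2]{Gamez} and used as a black box (in the proof of Lemma \ref{bound}), so the only meaningful comparison is with the cited source. Your argument is the standard proof of this kind of barrier result and is, in essence, the one in G\'amez's paper: the local Lipschitz bound turns the difference $z_\lambda=\overline U(\lambda)-u$ into a supersolution of a linear problem $(-\Delta+L)z_\lambda\geq 0$, the strong maximum principle together with Hopf's lemma upgrades the weak ordering $u\leq\overline U(\lambda)$ to the strong one, and connectedness of $\Sigma$ propagates the ordering from $(\lambda_0,u_0)$ to all of $\Sigma$ via the open-and-closed (sweeping) argument. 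Your preliminary observation that the qualifier $(\not\equiv)$ is automatic at every point of $\Sigma$ --- because $\overline U(\lambda)$ is by hypothesis never a solution --- is correct, and it is exactly what permits the maximum principle to be invoked at each point of the branch.

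Two technical points deserve attention, though they reflect the imprecision of the paper's setting rather than a flaw in your reasoning. First, your openness step invokes the embedding $C^{2}_{0}(\overline\Omega)\hookrightarrow C^{1}_{0}(\overline\Omega)$; under the paper's literal definition $C^{2}_{0}(\overline\Omega)=C^{2}(\Omega)\cap C_{0}(\overline\Omega)$ (no derivative control up to the boundary, and no norm specified) this embedding is not available, so one must either read $C^{2}_{0}(\overline\Omega)$ as carrying the $C^{2}(\overline\Omega)$-norm, or note that on sets of solutions elliptic regularity makes $C^{0}$- and $C^{1}$-convergence equivalent. Second, the device of the interior of the positive cone of $C^{1}_{0}(\overline\Omega)$ presupposes $\overline U(\lambda)=0$ on $\partial\Omega$; in the paper's actual applications the supersolutions $Ke$ and $K\widehat{\varphi}_1$ are strictly positive on $\partial\Omega$, in which case $z_\lambda\notin C^{1}_{0}(\overline\Omega)$ --- but then openness is even easier, since $z_\lambda$ has a positive minimum over $\overline\Omega$ and the ordering is stable under small uniform perturbations, with no need of Hopf's lemma. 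Neither point is a genuine gap.
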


Then, coming back to our problem we have the following.

\begin{lemma}\label{bound}
	Let $(\lambda,w_\lambda)\in \widehat{\Sigma}_{0}$, where $\widehat\Sigma_{0}$
	is given in Proposition \ref{bifur} (and hence $w_{\lambda}$ a positive solution of \eqref{Pa2}).
	\begin{enumerate}
		\item[(a)] If $b< 0$, then 
		$$\|w_\lambda\| \leq c/\lambda + c^r\quad \forall \lambda>0,$$
		where $c := (-a/b)^{1/(p-1)}$.
		\item[(b)] If $b=0$, then there exists $c_0>0$  such that 
		$$\|w_\lambda \|\leq c_0 \quad \forall \lambda > \lambda_1/a.$$
		\item[(c)] If $b>0$ and $p/r<1$ then  there exists $c_0>0$  such that 
		$$\|w_\lambda \|\leq c_0 \quad \forall \lambda \in \mbox{Proj}_{\R} \widehat{\Sigma}_0.$$
		\item[(d)]If $b>0$, $b \neq \lambda_1$ and $p/r=1$ then for each compact subset $\Lambda \subset (0,\infty)$ there exists $c_0>0$  such that 
		$$\|w_\lambda \|\leq c_1 \quad \forall \lambda \in \Lambda.$$
		\item[(e)] If $b>0$ and $1<p/r<(N+2)/(N-2)$, then for each compact subset  $\Lambda \subset (0,\infty)$,
		there exists $c_2>0$ such that 
		$$\|w_\lambda\| \leq c_2 \quad  \forall\lambda\in \Lambda.$$
	\end{enumerate}
\end{lemma}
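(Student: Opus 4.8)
The plan is to establish each a priori bound by constructing an explicit $\lambda$-dependent (or $\lambda$-uniform) super-solution of \eqref{Pa2} and then invoking Theorem \ref{th:Gamez}, using the fact that $\widehat\Sigma_0$ is a connected set of positive solutions emanating from $(\lambda_1/a,0)$. The key structural input is the bound $q_\lambda(s)/s\le\lambda$ from \eqref{q1} together with the asymptotics \eqref{q4} and \eqref{q6} for $q_\lambda(s)^p/s$, which control how the right-hand side $f(\lambda,s)=aq_\lambda(s)+bq_\lambda(s)^p$ grows in $s$. Since every solution on the continuum bifurcates from the trivial one, near the bifurcation point the solutions are small, so to apply Theorem \ref{th:Gamez} I only need a super-solution $\overline U(\lambda)$ that dominates some solution on $\widehat\Sigma_0$ near $(\lambda_1/a,0)$; connectedness then propagates the strict inequality $w<\overline U(\lambda)$ along the whole branch, which upon passing to the uniform norm gives the stated estimate.

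First I would handle the cases where the nonlinearity is effectively sublinear or even decaying. In cases (a) and (b), where $b\le 0$, the term $bq_\lambda(s)^p\le 0$, so $f(\lambda,s)\le aq_\lambda(s)\le a\lambda s$ by \eqref{q1}; in fact for (a) one sharpens this using $aq_\lambda(s)+bq_\lambda(s)^p\le 0$ once $q_\lambda(s)\ge(-a/b)^{1/(p-1)}=:c$, so the constant super-solution $\overline U=c/\lambda+c^r$ (the $w$-value corresponding to $u\equiv c$) works, yielding $\|w_\lambda\|\le c/\lambda+c^r$. For (b) with $b=0$ the equation is $-\Delta w=aq_\lambda(w)$ with $q_\lambda(w)\le\lambda w$, and since $\lambda>\lambda_1/a$ forces the relevant principal eigenvalue estimate to stay controlled, I would build a constant super-solution from the sublinear decay \eqref{q3} to produce a single $c_0$ valid for all $\lambda>\lambda_1/a$. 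Case (c), with $b>0$ and $p<r$, is the most favorable for large $s$: by \eqref{q6} we have $q_\lambda(s)^p/s\to 0$, so both nonlinear terms are sublinear at infinity, and the uniformity in $\lambda$ recorded in Remark \ref{limuniforme} lets me choose a constant super-solution giving one $c_0$ over the entire projection $\mathrm{Proj}_{\R}\widehat\Sigma_0$.

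The harder cases are (d) and (e), where $p/r\ge 1$ and the estimates must be localized to a compact set $\Lambda$. In (d), $r=p$ with $b\ne\lambda_1$, so by \eqref{q6} $q_\lambda(s)^p/s\to 1$, making the right-hand side asymptotically linear: $f(\lambda,s)\approx(a\lambda+b)\cdot(\text{linear behaviour})$, and the condition $b\neq\lambda_1$ is exactly what prevents resonance with $\lambda_1$ and keeps the principal eigenvalue of the associated linearized operator bounded away from zero on $\Lambda$, from which a super-solution built on $\varphi_1$ (or a constant) yields the bound $c_1$. In (e), with $1<p/r<(N+2)/(N-2)$, the nonlinearity is genuinely superlinear but Sobolev-subcritical in the $w$-variable (since $q_\lambda(w)^p\sim w^{p/r}$ for large $w$ by \eqref{q6}), so the natural tool is a blow-up/rescaling argument in the spirit of Gidas--Spruck rather than a direct super-solution: I would argue by contradiction, assume $\|w_{\lambda_n}\|\to\infty$ for $\lambda_n\in\Lambda$, rescale around the maximum point, and use the subcritical growth exponent $p/r$ together with the compactness of $\Lambda$ (so $\lambda_n\to\lambda_*\in\Lambda$ along a subsequence) to derive a nontrivial bounded entire solution of a limiting equation, contradicting the Liouville-type nonexistence theorem in the subcritical range. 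The main obstacle will be case (e): verifying that the rescaled equations converge to an admissible limit problem requires controlling $q_{\lambda_n}(w)^p/w^{p/r}$ uniformly as the argument blows up and checking that the limiting exponent stays strictly below the critical Sobolev exponent, which is precisely where the hypothesis $p/r<(N+2)/(N-2)$ is consumed.
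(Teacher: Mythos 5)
Your overall framework (a continuous family of super-solutions fed into Theorem \ref{th:Gamez}, with domination of a small solution near the bifurcation point propagated along the continuum) is exactly the paper's strategy for (b), (c) and half of (d), and your treatments of (a) and (e) are essentially sound: (a) works with your constant super-solution $c/\lambda+c^r$ (the paper instead evaluates the equation at the maximum point, which is equivalent here), and your rescaling/Liouville argument in (e) is just the proof of the Gidas--Spruck theorem that the paper cites directly, with compactness of $\Lambda$ giving the uniformity. The first genuine gap is in (b) and (c): a \emph{constant} function can never be a super-solution there, since for $C>0$ one has $-\Delta C=0$ while $aq_\lambda(C)+bq_\lambda(C)^p>0$ (here $a>0$, $b\ge 0$, $q_\lambda(C)>0$). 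The entire technical content of the paper's proof is the device that fixes this: it takes $\overline W(\lambda)=Ke$, where $e$ solves $-\Delta e=1$ in an \emph{enlarged} domain $\widehat\Omega\supset\supset\Omega$, so that $-\Delta(Ke)=K>0$ and $e\ge e_m>0$ on $\overline\Omega$; the super-solution inequality then reduces to $1/\lambda+q_\lambda(Ke)^{r-1}\ge ae$ in case (b), and to $1\ge a\frac{q_\lambda(Ke)}{Ke}e+b\frac{q_\lambda(Ke)^p}{Ke}e$ in case (c), where Remark \ref{limuniforme} supplies the $\lambda$-uniformity. The same issue infects the sub-case $0<b<\lambda_1$ of (d): a multiple of $\varphi_1$ itself fails near $\partial\Omega$, because there $K\varphi_1$ is small and the nonlinearity has slope $a\lambda>\lambda_1$ by \eqref{q2}; the paper must use $K\widehat\varphi_1$, the principal eigenfunction of an enlarged domain chosen so that $b<\widehat\lambda_1<\lambda_1$, which is bounded below on $\overline\Omega$ so that only the asymptotic regime of $q_\lambda$ matters.

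The second, more serious gap is case (d) with $b>\lambda_1$, which you also claim to settle with ``a super-solution built on $\varphi_1$ (or a constant)''. This provably cannot work. Since $r=p$, the identity $q_\lambda(s)^p=s-q_\lambda(s)/\lambda$ shows the right-hand side is asymptotically linear with slope exactly $b$ (not $a\lambda+b$: the term $aq_\lambda(s)$ is sublinear at infinity by \eqref{q3}). Hence for $K$ large the requirement $\lambda_1K\varphi_1\ge aq_\lambda(K\varphi_1)+bq_\lambda(K\varphi_1)^p$ fails in the interior, where the right side behaves like $bK\varphi_1$ with $b>\lambda_1$; the candidate $Ke$ fails for the same reason (one would need $1\ge be$ on $\Omega$, impossible because $\|e\|_{L^\infty(\Omega)}\ge 1/\lambda_1>1/b$), and constants fail as above. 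The paper handles this case by a completely different argument: assuming $\|w_n\|\to\infty$ with $\lambda_n\in\Lambda$, it sets $z_n=w_n/\|w_n\|$, derives $H_0^1$ and then $W^{2,m}$ bounds (using $q_\lambda(s)\le\lambda s$ and $q_\lambda(s)^p\le s$), extracts a limit $z\ge 0$ with $\|z\|=1$ solving $-\Delta z=bz$ via \eqref{q3} and \eqref{q6}, and concludes $b=\lambda_1$, a contradiction. Your remark that $b\neq\lambda_1$ ``keeps the linearized operator away from resonance'' points in this direction, but a super-solution cannot implement it when $b>\lambda_1$; the blow-up/nonresonance argument is an independent ingredient your proposal is missing.
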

\begin{proof}
	To prove (a), let $x_M \in \Omega$ be the point where $w_\lambda$ attained its maximum on $\Omega$. Then,
	\begin{eqnarray*}
	0 &\leq& -\Delta w_\lambda(x_M) = aq_{\lambda}(w_\lambda(x_M)) + b q_{\lambda}(w_\lambda(x_M))^p\\
	0 &\leq& a+b q_{\lambda}(w_\lambda(x_M))^{p-1}.
	\end{eqnarray*}
	Since $b<0$, this inequality is equivalent to
	$$
	q_{\lambda}(w_\lambda(x_M)) \leq  (-a/b)^{1/(p-1)} =: c = q_{\lambda} (I_{\lambda} (c)),
	$$
	and, hence, 
	$$w_\lambda(x_M) \leq c/\lambda+c^r = I_{\lambda}(c),$$
	showing that $w_\lambda(x) \leq c/\lambda+c^r$ for all $x \in \Omega$.
	
	\medskip
	
	In the case $b=0$,  we will build a family $\overline{W}(\lambda)$ of supersolutions of \eqref{Pa2}
	for every $\lambda\in[\lambda_{1}/a,+\infty)$ and apply Theorem \ref{th:Gamez}.
	To this aim, let
	$e$ be the 
	unique (positive) solution of
	\begin{eqnarray*}\label{e}
		\left\{\begin{array}{rl}
			-\Delta e = 1 & \mbox{in}~ \widehat{\Omega}, \\
			e=0& \mbox{on}~ \partial \widehat{\Omega},
		\end{array}\right.
	\end{eqnarray*}
	for some regular domain $\Omega \subset\subset \widehat{\Omega}$; in particular
	$e_m := \min_{\overline{\Omega}}e>0$.
	Let $K>0$ be a constant big enough (independent on $\lambda$) such that
	\begin{equation}\label{eq:Kgrande}
	q_{\lambda_1/a}(Ke_m)^{r-1} > a\|e\| \quad
	and \ \  Ke(x)\geq w_{\lambda_{1}/a}(x) \quad \forall x\in \Omega.
	\end{equation}
	Then, we consider the map 
	$$\overline{W}:[\lambda_1/a, \infty) \rightarrow C^2_0(\overline{\Omega}) \quad
	\text{such that } \overline{W} (\lambda) = Ke.$$
	We will show that $\overline{W}(\lambda) = Ke$ is a supersolution of (\ref{Pa2})
	for every $\lambda\in[\lambda_{1}/a,+\infty)$, that is
	$$
	\forall \lambda\geq\lambda_{1}/a: \ \ K=-\Delta(Ke) \geq a q_\lambda(Ke)\quad \mbox{in } {\Omega},
	$$
	or equivalently
	$$
	\forall \lambda\geq\lambda_{1}/a: \ \ Ke \geq aq_\lambda(Ke)e \quad \mbox{in } {\Omega}.
	$$
	Using that 
	$I_\lambda(q_\lambda(s)) = q_\lambda(s)/\lambda + q_\lambda(s)^r =s$, for all $s \geq 0$,
	we are actually reduced to show that
	\begin{eqnarray*}
		\forall \lambda\geq\lambda_{1}/a: \ \ \frac{q_\lambda(Ke)}{\lambda} + q_\lambda(Ke)^r \geq a q_\lambda(Ke)e \quad \mbox{in } {\Omega},
	\end{eqnarray*}
	and then, since $e(x)\geq e_m>0$ in $\overline{\Omega}$, to prove that
	\begin{equation}\label{ssol}
	\forall \lambda\geq\lambda_{1}/a: \ \ \frac{1}{\lambda} + q_\lambda(Ke)^{r-1} \geq a e \quad \mbox{in}~\Omega.
	\end{equation}
	Now since $\lim_{s \rightarrow \infty} q_\lambda(s) = \infty$, $r>1$ and  $e_m>0$, 
	by  the monotonicity of $q_{\lambda}$ with respect to $\lambda$ (see \eqref{eq:qlcrescente})
	and the choice of $K$ (see \eqref{eq:Kgrande}) we obtain that, for every $\lambda\geq\lambda_{1}/a$:
	$$ \frac{1}{\lambda} +  q_\lambda(Ke)^{r-1} \geq \frac{1}{\lambda} +  q_\lambda(Ke_m)^{r-1} > q_{\lambda_1/a}(Ke_m)^{r-1} > a\|e\| \geq ae \quad \mbox{in } \overline{\Omega},$$
	showing that \eqref{ssol} is satisfied and hence, $\overline{W}(\lambda) = Ke$ is a supersolution,
	but not a solution, of \eqref{Pa2} for every $\lambda\in[\lambda_{1}/a,+\infty)$. 
	Due to the choice of $K$ satisfying \eqref{eq:Kgrande},
	all the hypotheses of  Theorem \ref{th:Gamez}  are satisfied and then 
	we have
	$$w_\lambda < \overline{W}(\lambda) = Ke\leq K \max_{\overline\Omega}e:=c_{0},$$
	completing the proof of (b).
	
	\medskip
	
To prove   (c) we argue as above, building a family  $\overline{W}(\lambda)$ of supersolutions of (\ref{Pa2}). We consider again the constant map
	$$\overline{W}:[\lambda_1/a, \infty) \rightarrow C^2_0(\overline{\Omega}) \quad
	\text{such that } \overline{W} (\lambda) = Ke.$$
	Then, $\overline{W}(\lambda) = Ke$ is a supersolution of (\ref{Pa2}) for every $\lambda \in \mbox{Proj}_{\R}\widehat{\Sigma}_0$ if
	$$
	\forall \lambda\in \mbox{Proj}_{\R}\widehat{\Sigma}_0: \ \ K \geq aq_\lambda(Ke) + b q_\lambda(Ke)^p\quad \mbox{in } {\Omega},
	$$
	or equivalently
	$$
	\forall \lambda\in \mbox{Proj}_{\R}\widehat{\Sigma}_0: \ \ 1 \geq a\frac{q_\lambda(Ke)}{Ke}e + b \frac{q_\lambda(Ke)^p}{Ke} e\quad \mbox{in } {\Omega}.
	$$
	Since $r>p$, the limits $\lim_{s \rightarrow \infty} q_\lambda(s)/s = \lim_{s \rightarrow \infty} q_\lambda(s)^p/s = 0$ are uniform in $\lambda \in \mbox{Proj}_{\R}\widehat{\Sigma}_0$ (see Remark \ref{limuniforme}), we can obtain $K>0$ large enough (independent on $\lambda$) such that $\overline{W}(\lambda) = Ke$ is a supersolution of (\ref{Pa2}), proving the result.
	
	\medskip
Now, let us prove (d). To this end, we consider two cases: $0<b <\lambda_1$ and $b>\lambda_1$. For the first case we argue as above, building a family  $\overline{W}(\lambda)$ of supersolutions of (\ref{Pa2}). Thus, let $0<b < \lambda_1$ be fixed. By the monotonicity properties of principal eigenvalue with respect to the domain, we can get a regular domain $\widehat{\Omega}$ such that 
	$$
	\Omega \subset \subset \widehat{\Omega} \quad \mbox{and} \quad b< \widehat{\lambda}_1<\lambda_1
	$$ 
	where $\widehat{\lambda}_1$ stands for the principal eigenvalue of $-\Delta$ in $\widehat{\Omega}$ under homogeneous Dirichlet boundary conditions. Define
	$$\overline{W}:[\lambda_1/a, \infty) \rightarrow C^2_0(\overline{\Omega}) \quad
	\text{such that } \overline{W} (\lambda) = K\widehat{\varphi}_1.$$
	We will show that $\overline{W}(\lambda) = K \widehat{\varphi}_1$ is a supersolution of (\ref{Pa2})
	for every $\lambda\in[\lambda_{1}/a,+\infty)$, that is
	$$
	\forall \lambda\geq\lambda_{1}/a: \ \ K\widehat{\lambda}_1 \widehat{\varphi}_1=-\Delta(K\widehat{\varphi}_1) \geq a q_\lambda(K\widehat{\varphi}_1) + b q(K\widehat{\varphi}_1)^p\quad \mbox{in } {\Omega},
	$$
	or equivalently
	\begin{equation}\label{ttt}
		\forall \lambda\geq\lambda_{1}/a: \ \ \widehat{\lambda}_1    \geq \frac{q_\lambda(K\widehat{\varphi}_1) + b q(K\widehat{\varphi}_1)^p}{K\widehat{\varphi}_1} \quad \mbox{in } {\Omega}.
	\end{equation}
    Since $p=r$
	$$
	\lim_{s \rightarrow\infty}  \frac{q_\lambda(s) + b q(s)^p}{s} = b
	$$
	(see (\ref{q6}) in Lemma \ref{lem:q}), we can chose $K>0$ large enough such that (\ref{ttt}) holds. By Theorem \ref{th:Gamez}, we obtain the result.\\
	Now, let $b>\lambda_1$. We will proceed by contradiction. If (d) fails, there exists $(\lambda_n,w_n) \in \widehat{\Sigma}_0$ such that $\lambda_n \in \Lambda \subset(0,\infty)$ and
	$$
	\|w_n\| \rightarrow \infty \quad \mbox{as }n \rightarrow \infty. 
	$$
	Moreover, up to a subsequence if necessary,
	$$
	\lambda_n \rightarrow \lambda^*>0.
	$$
	Define
	$$
	z_n:= \frac{w_n}{\|w_n\|}, \quad n \geq 1.
	$$
	Since $(\lambda_n,w_n)$ is a positive solution of (\ref{Pa2}), we obtain that $(\lambda_n, z_n)$ verifies
	\begin{equation}\label{zn}
	\left\{\begin{array}{ll}
	    -\Delta z_n = \displaystyle\frac{a q_{\lambda_n}(w_n) +b q_{\lambda_n}(w_n)^p}{\|w_n\|}  & \mbox{in }\Omega,  \\
	     z_n = 0&\mbox{on }\partial\Omega. 
	\end{array} \right.
	\end{equation}
	Multiplying this equation by $z_n$,  integrating in $\Omega$ and applying the formula of integration by parts gives
	$$
	\|z_n\|^2_{H_0^1} = \int_\Omega \frac{(a q_{\lambda_n}(w_n) +b q_{\lambda_n}(w_n)^p)z_n}{\|w_n\|}. 
	$$
	Since $r=p$, by Lemma \ref{lem:q}, $q_\lambda(s)^p \leq s$ and $q_\lambda(s) \leq \lambda s$, for all $s \geq 0$ and $\lambda>0$. Thus,
	$$
	\|z_n\|^2_{H_0^1} \leq \int_\Omega \frac{(a \lambda_nw_n +b w_n)z_n}{\|w_n\|} = \int_\Omega (a \lambda_n  + b )z_n^2 \leq (a \max_{n\geq1} \lambda_n +b) |\Omega|, 
	$$
	showing that $z_n$ is bounded in $H_0^1(\Omega)$. By elliptic regularity, $z_n$ is also bounded in $W^{2,m}(\Omega)$, $m>1$. Thus, it follows from  Morrey's compact embedding that, up to  subsequence if necessary,
	$$
	z_n \rightarrow z \quad \mbox{in }C(\overline{\Omega}).
	$$
	Let $\psi \in C_0^\infty(\Omega)$. Multiplying (\ref{zn}) by $\psi$,  integrating in $\Omega$ and applying the formula of integration by parts gives
	$$
		-\int_\Omega z_n \Delta \psi  = \int_\Omega \frac{(a q_{\lambda_n}(w_n) +b q_{\lambda_n}(w_n)^p)\psi}{\|w_n\|} =\int_\Omega \frac{(a q_{\lambda_n}(z_n\|w_n\|) +b q_{\lambda_n}(z_n\|w_n\|)^p)z_n\psi}{z_n\|w_n\|}
	$$
	In view of (\ref{q3}) and (\ref{q6}) of Lemma \ref{lem:q}, letting $n \rightarrow \infty$ in the above equality yields
	$$
		-\int_\Omega z \Delta \psi  = \int_\Omega b z\psi.
	$$
	Since $z \in H_0^1(\Omega)$, $z\geq 0$ in $\Omega$ and $\|z\| =1$, it follows that $b = \lambda_1$, which is a contradiction with the inicial assumption $b>\lambda_1$.
	\medskip
	
	Finally, to prove  (e) observe that
	\begin{eqnarray*}
		\frac{aq_{\lambda}(s) + b q_{\lambda}(s)^p}{s^{p/r}} &=& \frac{aq_{\lambda}(s) + b q_{\lambda}(s)^p}{(q_{\lambda}(s)/\lambda + q_{\lambda}(s)^r)^{p/r}}\\ &=&\frac{a}{\left(q_{\lambda}(s)^{1-\frac{r}{p}}/\lambda+q_{\lambda}( s)^{r- {r}/{p}}\right)^{p/r}} + \frac{b}{\left(q_{\lambda}( s)^{1-r}/\lambda+1\right)^{p/r}}
	\end{eqnarray*}
	which implies
	$$ \lim_{s \rightarrow +\infty} \frac{aq_{\lambda}(s) + b q_{\lambda}(s)^p}{s^{p/r}} = b>0.$$
	By a classical result of Gidas and Spruck, see \cite[Theorem 1.1]{gidas},
	we obtain the conclusion.
\end{proof}
\begin{remark}
Note that when $b<0$  we have a uniform bound with respect to $\lambda$ of the positive solutions
$u_{\lambda}$ of problem \eqref{Pa1}. Indeed from (a) of Lemma \ref{bound}, since
$$
\forall x\in \Omega:  \ I_{\lambda}(u_{\lambda}(x))=\frac{u_{\lambda}(x)}{\lambda} +
 u_{\lambda}^{r}(x) = w_{\lambda}(x) \leq \frac c\lambda +c^{r}=I_{\lambda}(c),
$$
we obtain $\|u_{\lambda}\|\leq c.$
\end{remark}

Now, we have a better information on the behaviour of the continuum of 
positive solutions emanating from the trivial solution at $\lambda_{1}/a$.

\begin{proposition}\label{comportamentoSigma}
Let $\widehat{\Sigma}_{0}$	be the continuum of positive solutions of  \eqref{Pa2} given in Proposition \ref{bifur}.
\begin{enumerate}
\item Suppose that one of the following conditions is satisfied: 
\begin{enumerate}
\item[(i)] $b\leq 0$;
\item[(ii)] $b>0$, $r=p$ and $b<\lambda_1$;
\item[(iii)] $b>0$ and $r>p$.
\end{enumerate}
Then $(\lambda_1/a,\infty) \subset \mbox{Proj}_{\R}\widehat{\Sigma}_{0}$.  
\medskip
\item Suppose that one of the following conditions is satisfied: 
\begin{enumerate}
    \item[(i)] $r=p$ and $b>\lambda_1$;
    \item[(ii)] $b>0$, $1<p/r <(N+2)/(N-2)$ and $a>\phi(s_0)$.
\end{enumerate}
Then $(0,\lambda_1/a) \subset \mbox{Proj}_{\R}\widehat{\Sigma}_{0}$ and \eqref{Pa2} does not possess positive solution for $\lambda$ large. \medskip
\item Suppose that $r=p$ and $b= \lambda_1$, then $\mbox{Proj}_{\R}\widehat{\Sigma}_{0} = \{\lambda_1/a\}$. \end{enumerate}
\end{proposition}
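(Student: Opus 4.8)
The plan is to read off the three conclusions from the interplay of three ingredients already at our disposal: the unboundedness of $\widehat\Sigma_0$ in $\R\times C_0^1(\overline\Omega)$ (Proposition \ref{bifur}), the necessary conditions for existence of positive solutions of \eqref{Pa2} collected in Remark \ref{rem:necesariaw}, and the a priori bounds of Lemma \ref{bound}. The starting observation is that $\mbox{Proj}_{\R}\widehat\Sigma_0$ is the image of the connected set $\widehat\Sigma_0$ under the continuous projection $(\lambda,w)\mapsto\lambda$, hence it is connected, i.e. an interval $J\subset(0,\infty)$; moreover, since the branch bifurcates from $(\lambda_1/a,0)$, there are solutions with $\lambda$ arbitrarily close to $\lambda_1/a$, so $\lambda_1/a\in\overline J$. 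A recurring technical point is that Lemma \ref{bound} bounds the uniform norm $\|w_\lambda\|$, whereas the unboundedness in Proposition \ref{bifur} is measured in $C_0^1(\overline\Omega)$; I would pass from one to the other by the standard elliptic bootstrap: on a set where $\|w_\lambda\|$ and $\lambda$ stay bounded, the right-hand side $aq_\lambda(w_\lambda)+bq_\lambda(w_\lambda)^p$ is bounded in $L^\infty(\Omega)$, whence $w_\lambda$ is bounded in $W^{2,m}(\Omega)$ and then in $C_0^1(\overline\Omega)$.

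For part (1) I would first use Remark \ref{rem:necesariaw} to bound $J$ from below: in case (i) there is no positive solution for $\lambda\le\lambda_1/a$, in case (ii) none for $\lambda<\lambda_1/a$, and in case (iii) none for $\lambda<\lambda_1/(a-\phi(s_0))$; in every instance $J$ is bounded below by a positive number $\le\lambda_1/a$. On the other hand the relevant a priori estimate — (a)/(b) for $b\le0$, (c) for $b>0,r>p$, (d) for $b>0,r=p,b<\lambda_1$ — bounds $\|w_\lambda\|$ on every compact subset of the admissible $\lambda$-range (indeed uniformly in cases (a)--(c)). Since $J$ is bounded below and $\widehat\Sigma_0$ is unbounded, the $w$-component cannot blow up while $\lambda$ stays in a compact set, so $J$ must be unbounded above. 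Being an interval with $\inf J\le\lambda_1/a$ and unbounded above, $J\supset(\lambda_1/a,\infty)$, which is the claim.

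For part (2) the necessary conditions act from the other side: Remark \ref{rem:necesariaw}(c) (case (i), $r=p$, $b>\lambda_1$) rules out positive solutions for $\lambda>\lambda_1/a$, and Remark \ref{rem:necesariaw}(d) (case (ii), $r<p$, $a>\phi(s_0)$) rules them out for $\lambda>\lambda_1/(a-\phi(s_0))$; this already proves the second assertion, that \eqref{Pa2} has no positive solution for $\lambda$ large, and shows $J$ is bounded above. Here the a priori bounds are Lemma \ref{bound}(d) and (e), which control $\|w_\lambda\|$ only on compact subsets of $(0,\infty)$. The key step — and the main obstacle — is to locate the unboundedness of $\widehat\Sigma_0$: since $\lambda$ is bounded above on $\widehat\Sigma_0$ and $\|w_\lambda\|$ stays bounded on every compact $\Lambda\subset(0,\infty)$, any sequence $(\lambda_n,w_n)\in\widehat\Sigma_0$ with $\|w_n\|_{C_0^1}\to\infty$ must have $\lambda_n\to0^+$ (otherwise a subsequence would lie in a compact $\Lambda\subset(0,\infty)$, contradicting the bound after bootstrapping). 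Hence $\inf J=0$, and since $J$ is an interval containing values arbitrarily close to $\lambda_1/a$, we conclude $(0,\lambda_1/a)\subset J$.

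Finally, part (3) is immediate: when $r=p$ and $b=\lambda_1$, Remark \ref{rem:necesariaw}(c) forbids positive solutions for every $\lambda\ne\lambda_1/a$, so $J\subset\{\lambda_1/a\}$; since $\widehat\Sigma_0$ emanates from $(\lambda_1/a,0)$ and is nonempty, $\lambda_1/a\in J$, whence $\mbox{Proj}_{\R}\widehat\Sigma_0=\{\lambda_1/a\}$, the continuum being unbounded purely in the $w$-direction over the single value $\lambda=\lambda_1/a$.
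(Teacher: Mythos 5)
Your proposal is correct and follows essentially the same route as the paper: the authors likewise combine the unboundedness of $\widehat\Sigma_0$ from Proposition \ref{bifur}, the necessary conditions of Remark \ref{rem:necesariaw}, and the a priori bounds of Lemma \ref{bound} (upgraded from $C(\overline\Omega)$ to $C_0^1(\overline\Omega)$ by elliptic regularity) to force the continuum to be unbounded toward $\lambda\to+\infty$ in case (1) and toward $\lambda\to 0^+$ in case (2), with case (3) read off directly from Remark \ref{rem:necesariaw}(c). Your write-up merely makes explicit what the paper leaves compressed, namely the connectedness of the projection and the blow-up-sequence argument behind the phrase ``there does not exist bifurcation point from infinity.''
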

\begin{proof}
First, observe that in the cases (1) and (2), by  Lemma  \ref{bound}, the positive solutions of (\ref{Pa2}) are bounded in ${C}(\overline{\Omega})$ and, by elliptic regularity,  also 
in ${C}_0^1(\overline{\Omega})$. Thus, in view of Remark  \ref{rem:necesariaw}: 
\begin{enumerate}
\item If (i), (ii) or (iii) occurs, then \eqref{Pa2} does not possess positive solution for $\lambda$ positive and small. Hence,  $\widehat{\Sigma}_0$ has to be  unbounded with respect to large values of $\lambda$, and this gives the conclusion.
\item  If (i) or (ii)  occurs, then there does not exist positive solution for $\lambda$ large enough. Therefore, since there does not exist bifurcation point from infinity  of positive solutions of \eqref{Pa2} (by Lemma  \ref{bound}),  the inclusion $(0,\lambda_1/a) \subset \mbox{Proj}_{\R}\widehat{\Sigma}_{0}$ must be satisfied.
\end{enumerate}
Finally, the case $r=p$ and $b=\lambda_1$ is a direct consequence of Remark \ref{rem:necesariaw} (c).
\end{proof}

Figure \ref{bif} shows some admissible situations within the setting of Proposition \ref{comportamentoSigma}.

An immediate consequence of the above proposition is the following result of existence of positive solution of (\ref{Pa2}).

\begin{figure}
\centering
\includegraphics[scale=0.6]{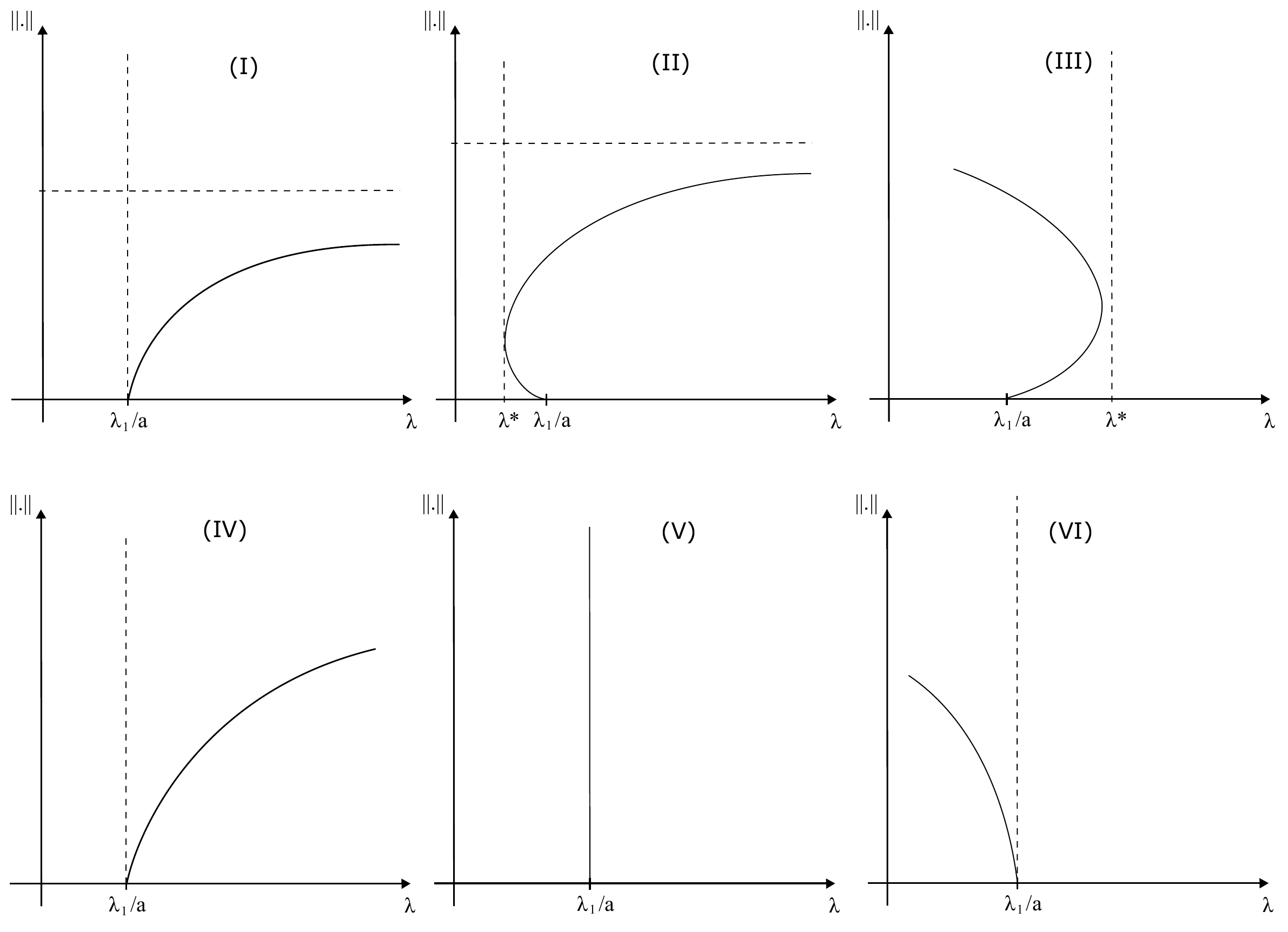}
\caption{\label{bif}Possible bifurcation diagrams of solution of (\ref{Pa2}) for the case (I) $b\leq0$; (II) $b>0$  and $r >p$; (III) $b>0$, $1<p/r< (N+2)(N-2)$ and $a>\phi(s_0)$; (IV) $0<b<\lambda_1$ and $r=p$; (V) $b=\lambda_1$ and $r=p$; (VI) $b>\lambda_1$ and $r=p$.}
\end{figure}

\begin{corollary}\label{Existencia}
We have the following.
\begin{enumerate}
    \item[(a)] Suppose that either $b \leq 0$ or $0<b<\lambda_1$ and $r=p$. Then (\ref{Pa2}) admits (at least) a positive solution if, and only if, $\lambda> \lambda_1/a$.  Moreover, assuming $\lambda > \lambda_1/a$, if \smallskip
    \begin{itemize}
    \item[(i)] $b=0$, or \smallskip
    \item[(ii)]$b<0$ and $p \geq r$ \smallskip
    \end{itemize}
    the solution is unique. \smallskip
    \item[(b)] Suppose $b>0$ and $r>p$. Then  (\ref{Pa2}) admits (at least) a positive solution if $\lambda> \lambda_1/a$. Moreover, there exists $\lambda^* \in (0,\lambda_1/a)$ such that (\ref{Pa2}) admits (at least) two positive solutions for each $\lambda \in (\lambda^*,\lambda_1/a)$.
    \item[(c)] Suppose that $r=p$ and $b>\lambda_1$. Then (\ref{Pa2}) admits (at least) a positive solution if  $0<\lambda<\lambda_{1}/a$.
    \item[(d)] Suppose that $b>0$, $r<p$, $p/r <(N+2)/(N-2)$ and $a>\phi(s_0)$. Then (\ref{Pa2}) admits (at least) a positive solution if  $0<\lambda<\lambda_{1}/a$. Moreover, there exists $\lambda^* >\lambda_1/a$ such that (\ref{Pa2}) admits (at least) two positive solutions for each $\lambda \in (\lambda_1/a, \lambda^*)$.
    \item[(e)] Suppose $r=p$ and $b=\lambda_1$. Then (\ref{Pa2}) admits a positive solution if, and only if, $\lambda = \lambda_1/a$. Moreover, all positive solutions of (\ref{Pa2}) are given by
    $$
    w=c\varphi_1 \quad c\in \R, c>0.
    $$
\end{enumerate}
\end{corollary}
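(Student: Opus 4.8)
The plan is to derive the corollary by combining the global structure of $\widehat{\Sigma}_0$ recorded in Proposition \ref{comportamentoSigma} with the necessary conditions of Remark \ref{rem:necesariaw} and the a priori bounds of Lemma \ref{bound}. The existence statements are then immediate: in case (a) (for $b\le 0$, or $0<b<\lambda_1$ with $r=p$) and in case (b) the inclusion $(\lambda_1/a,\infty)\subset \mbox{Proj}_{\R}\widehat{\Sigma}_0$ produces a positive solution for every $\lambda>\lambda_1/a$, while in cases (c) and (d) the inclusion $(0,\lambda_1/a)\subset \mbox{Proj}_{\R}\widehat{\Sigma}_0$ produces one for every $0<\lambda<\lambda_1/a$. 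The ``only if'' directions in (a) and (e) follow from Remark \ref{rem:necesariaw}: part (a) excludes $\lambda\le\lambda_1/a$ when $b\le 0$, and part (c) excludes $\lambda<\lambda_1/a$ when $0<b<\lambda_1$, $r=p$; the endpoint $\lambda=\lambda_1/a$ in this last situation is excluded by testing \eqref{Pa1} with $\varphi_1$ as in the proof of Lemma \ref{ne}(c), which for $r=p$, $\lambda=\lambda_1/a$ forces $(\lambda_1-b)\int_\Omega u_\lambda^r\varphi_1=0$, impossible for a positive $u_\lambda$ since $b\neq\lambda_1$. Case (e) is the degenerate one: when $r=p$, $b=\lambda_1$ and $\lambda=\lambda_1/a$ the identity $w=q_\lambda(w)/\lambda+q_\lambda(w)^r$ yields $aq_\lambda(w)+\lambda_1 q_\lambda(w)^r=\lambda_1 w$ for every $w\ge 0$, so \eqref{Pa2} collapses to the linear eigenvalue problem $-\Delta w=\lambda_1 w$, whose positive solutions are exactly $w=c\varphi_1$ with $c>0$; this gives both existence and the explicit description, while Remark \ref{rem:necesariaw}(c) rules out every $\lambda\neq\lambda_1/a$.

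For the uniqueness assertions in (a) I would invoke the Brezis--Oswald principle. For fixed $\lambda>\lambda_1/a$ the reaction term of \eqref{Pa2} is $s\mapsto aq_\lambda(s)+bq_\lambda(s)^p$, and it suffices to check that $s\mapsto\bigl(aq_\lambda(s)+bq_\lambda(s)^p\bigr)/s$ is strictly decreasing. When $b=0$ this is just $a\,q_\lambda(s)/s$, strictly decreasing by \eqref{q1}; when $b<0$ and $p\ge r$ it is the sum of the strictly decreasing term $a\,q_\lambda(s)/s$ (again \eqref{q1}) and the decreasing term $b\,q_\lambda(s)^p/s$, the latter because $q_\lambda(s)^p/s$ is increasing by \eqref{q5} and $b<0$. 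In both cases the quotient is strictly decreasing, so there is at most one positive solution, and uniqueness follows.

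The multiplicity statements in (b) and (d) are the substantive part, and the main obstacle, since they require reading the shape of the branch rather than only its projection. In case (b) the second part of Remark \ref{rem:necesariaw}(b) shows that for $\lambda>\lambda_1/a$ there are no positive solutions of small norm; as $\widehat{\Sigma}_0$ carries arbitrarily small solutions near $(\lambda_1/a,0)$, the bifurcation must therefore be subcritical, the branch entering $\{\lambda<\lambda_1/a\}$. Since $\widehat{\Sigma}_0$ is bounded in $C^1_0(\overline\Omega)$ by Lemma \ref{bound}(c), yet its projection fills all of $(\lambda_1/a,\infty)$, the connected branch must turn back at some $\lambda^*:=\inf\mbox{Proj}_{\R}\widehat{\Sigma}_0$, which is positive by the first part of Remark \ref{rem:necesariaw}(b) (here $\phi(s_0)<0$) and strictly below $\lambda_1/a$. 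For each $\lambda\in(\lambda^*,\lambda_1/a)$ the branch then meets the fiber over $\lambda$ at least twice, once on the subcritical small-norm arc and once on the upper arc continuing into $(\lambda_1/a,\infty)$, giving the two solutions. Case (d) is the mirror image: Remark \ref{rem:necesariaw}(d) now forces a supercritical bifurcation, and the a priori bound of Lemma \ref{bound}(e) together with the nonexistence for large $\lambda$ from Proposition \ref{comportamentoSigma}(2) forces a turning point at some $\lambda^*>\lambda_1/a$, producing two solutions for $\lambda\in(\lambda_1/a,\lambda^*)$. The delicate point in both cases is to justify rigorously that the turning of a merely connected (not a priori smooth) continuum yields a genuine second solution on each intermediate fiber; I would make this precise by combining the local bifurcation description of the small-solution arc with the global boundedness and connectedness of $\widehat{\Sigma}_0$.
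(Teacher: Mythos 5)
Your proposal follows essentially the same route as the paper's proof: existence read off from Proposition \ref{comportamentoSigma} (equivalently, Proposition \ref{bifur} plus Remark \ref{rem:necesariaw}), uniqueness in (a) via the Brezis--Oswald argument using the monotonicity items \eqref{q1} and \eqref{q5}, multiplicity in (b) and (d) from subcriticality/supercriticality of the bifurcation forced by the small-norm nonexistence in Remark \ref{rem:necesariaw} together with the a priori bounds of Lemma \ref{bound}, and case (e) via the identity $\lambda_1 w = a q_{\lambda_1/a}(w)+\lambda_1 q_{\lambda_1/a}(w)^r$. If anything, you are more careful than the paper at its thinnest points --- the exclusion of the endpoint $\lambda=\lambda_1/a$ when $0<b<\lambda_1$, $r=p$, the ``all solutions'' direction in (e) via reduction to $-\Delta w=\lambda_1 w$, and the turning-continuum step in (b)/(d), whose remaining delicacy you honestly flag while the paper's own proof disposes of it with the single sentence ``Hence, the bifurcation is subcritical.''
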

\begin{proof} 
(a)
The existence is a consequence of Proposition \ref{bifur} and Remark \ref{rem:necesariaw} (a).
The  uniqueness follows by the same arguments as in 
 \cite[Section 2]{BO}, once that, in case (i) $s \mapsto aq(\lambda,s)/s$ is decreasing 
by Lemma \ref{lem:q} item \eqref{q1}, and in case (ii) 
  $s \mapsto (aq(\lambda,s)+b q(\lambda,s)^p)/s$ is 
 decreasing if $p \geq r$ by  Lemma \ref{lem:q} item \eqref{q5}. \medskip
 
(b) It follows again by  Proposition \ref{bifur} combined with Remark \ref{rem:necesariaw} (b) that ensures that the bifurcation at $\lambda=\lambda_1/a$ is subcritical. Indeed, for $\lambda>\lambda_1/a$ there does not exist positive solution of (\ref{Pa2}) with small norm. Hence, the bifurcation is subcritical. 

\medskip

The proofs of (c) and (d) are similar.

\medskip

In the case (e),  Remark \ref{rem:necesariaw} (c) ensures that $\lambda= \lambda_1/a$ is a necessary condition for the existence of positive solution of (\ref{Pa2}). Moreover, for all $c>0$,
\begin{eqnarray*}
-\Delta(c\varphi_1) = \lambda_1 c\varphi_1 &=& \lambda_1 I(q_{\lambda_1/a} (c\varphi_1))\\
&=& \lambda_1 \left( \frac{q_{\lambda_1/a} (c\varphi_1)}{\lambda_1/a} + q_{\lambda_1/a} (c\varphi_1)^p\right)\\
&=& aq_{\lambda_1/a} (c\varphi_1) + \lambda_1 q_{\lambda_1/a} (c\varphi_1)^p,
\end{eqnarray*}
showing that $w=c \varphi_1$, $c>0$ is the positive solution of (\ref{Pa2}). This ends the proof.
\end{proof}

\begin{remark}\label{remark1}
Another consequence of Proposition \ref{bifur} and \ref{comportamentoSigma}
is the existence of an unbounded continuum $\Sigma_0$ of positive solutions of (\ref{Pa1}), namely,
$$\Sigma_0:= \{(\lambda,u) \in \R \times \mathcal{C}_0^1(\overline{\Omega});~ (\lambda, u/\lambda + u^r) \in \widehat{\Sigma}_0\}. $$
Moreover, Proposition \ref{comportamentoSigma} still remains valid if replace $\widehat{\Sigma}_0$ by $\Sigma_0$
\end{remark}

\section{Proofs of the  main Theorems }\label{sec:final}
In this section we will prove  results of existence of positive solution of (\ref{P1}), 
under suitable assumptions on $g$.
For the reader convenience we rewrite here the theorems we are going to prove.

The analysis will be done in three cases and in the first two we will use the following classical Bolzano Theorem (see for instance \cite{ArcoyaLP}).

\begin{theorem}\label{th:Bolzano}
Let $X$ be a Banach space, $I\subset \R$ be an interval and $\Sigma_0 \subset I \times X$ be a continuum.
Assume that $h:\Sigma_{0} \to \R$ is a continuous function such that for some $(\mu_1,u_1), (\mu_2,u_2) \in \Sigma_0$ it holds
$h(\mu_1,u_1)  h(\mu_2,u_2)<0$. Then there exists $(\lambda_{*},u_{*}) \in \Sigma_0$ such that $h(\lambda_{*},u_{*})=0$.
 \end{theorem}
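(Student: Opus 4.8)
The plan is to deduce this as an immediate consequence of two elementary topological facts: the continuous image of a connected set is connected, and the connected subsets of $\R$ are precisely the intervals. Here the word \emph{continuum} is understood to mean a connected set, and $\Sigma_0$ is equipped with the subspace topology inherited from $I\times X$, with respect to which $h$ is assumed continuous.

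First I would pass to the image $h(\Sigma_0)\subset\R$. Since $\Sigma_0$ is connected and $h$ is continuous, the set $h(\Sigma_0)$ is a connected subset of $\R$, and therefore an interval.

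Next, relabelling the two points if necessary, I may assume $h(\mu_1,u_1)<0<h(\mu_2,u_2)$; indeed the hypothesis $h(\mu_1,u_1)\,h(\mu_2,u_2)<0$ forces the two values to have strictly opposite signs, and I am free to interchange the roles of $(\mu_1,u_1)$ and $(\mu_2,u_2)$. Then both a strictly negative number and a strictly positive number belong to the interval $h(\Sigma_0)$, so by the interval property $0\in h(\Sigma_0)$ as well. Hence there exists $(\lambda_*,u_*)\in\Sigma_0$ with $h(\lambda_*,u_*)=0$, which is exactly the conclusion.

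I do not anticipate any genuine obstacle: the statement is nothing more than the classical intermediate value theorem transported from an interval of $\R$ to an arbitrary connected domain. The only point I would make explicit is that connectedness of $\Sigma_0$ (as opposed to, say, path-connectedness or compactness) is exactly the hypothesis that is needed, since connectedness alone is preserved by continuous maps and is precisely what forces $h(\Sigma_0)$ to be an interval and thus to contain every value between $h(\mu_1,u_1)$ and $h(\mu_2,u_2)$.
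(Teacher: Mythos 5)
Your proof is correct. Note that the paper itself does not prove this theorem at all: it invokes it as a classical result, citing Arcoya--Leonori--Primo \cite{ArcoyaLP}. Your argument --- the continuous image of the connected set $\Sigma_0$ is a connected subset of $\R$, hence an interval, which must contain $0$ once it contains values of both signs --- is precisely the standard proof underlying that classical statement, and it correctly identifies connectedness of the continuum as the only hypothesis actually needed.
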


Roughly speaking, our results depend on the  position of $\mbox{Proj}_{\R}\Sigma_0$ with respect to
$\lambda_{1}/a$.

\subsection{The case $(\lambda_1/a,\infty) \subset \mbox{Proj}_{\R}\Sigma_0$}

Our first result of existence of positive solution of \eqref{P1} deals with the case in which $(\lambda_1/a,\infty) \subset \mbox{Proj}_{\R}\Sigma_0$. To this end, we will assume that  $g$ satisfies the next hypothesis: \medskip
\begin{enumerate}[label=(g\arabic*),ref=g\arabic*,start=1]
    \item \label{g_{1}} $g:[0,\infty) \rightarrow [0,\infty)$ is a continuous function and there exists a  constant $g_0>0$ such that
    $$g(s) >g_0 \quad \forall s \in (0,+\infty),$$
\end{enumerate}

\medskip

The first one is.
\begin{taggedtheorem}{A}
Assume that $g$ satisfies \eqref{g_{1}}. If one of the assumptions of Proposition \ref{comportamentoSigma} (1) occurs, then \eqref{P1} admits at least one  solution for each  $a>g(0)\lambda_1$.
\end{taggedtheorem}
\begin{proof}
If one of the assumptions of Proposition \ref{comportamentoSigma} (1) occurs, by Corollary \ref{Existencia} and Remark \ref{remark1}, $(\lambda_1/a,\infty) \subset \mbox{Proj}_{\R}\Sigma_0$, where $\Sigma_0$ is the unbounded continuum of positive solutions of (\ref{Pa1}).

Consider the continuous map $h: \Sigma_0 \subset (0,+\infty) \times {C}_0^1(\overline{\Omega}) \rightarrow {C}_0^1(\overline{\Omega})$ defined by
$$h(\lambda,u):= \frac{1}{\lambda} - g(|\nabla u|_2^2).$$
Then the zeros of $h$  are positive solutions of (\ref{P1}). Let us apply the Bolzano Theorem to $h$.
In $(\mu_{1},u_{1}):=(\lambda_1/a,0) \in \overline{\Sigma}_0$ we have
$$
h(\mu_{1}, u_{1})= h(\lambda_1/a,0) = \frac{a}{\lambda_1} - g(0)>0 
$$
thanks to $a>\lambda_1g(0)$.

On the other hand, 
$$
\limsup_{\substack{(\lambda,u) \in \Sigma_0 \\ \lambda \rightarrow +\infty}} h(\lambda,u) =\limsup_{\substack{(\lambda,u) \in \Sigma_0 \\ \lambda \rightarrow +\infty}} \left(\frac{1}{\lambda} - g(|\nabla u|_2^2)\right) = - \liminf_{\substack{(\lambda,u) \in \Sigma_0 \\ \lambda \rightarrow+ \infty}} g(|\nabla u|_2^2)
$$
and since $g(s) \geq g_0>0$, it follows that 
$$
\limsup_{\substack{(\lambda,u) \in \Sigma_0 \\ \lambda \rightarrow+ \infty}} h(\lambda,u) \leq - g_0 <0.
$$ 
Therefore, there exists some $( \mu_{2},u_{2}) \in \Sigma_0$ such that $h(\mu_{2},u_{2}) <0$. By the Bolzano Theorem
\ref{th:Bolzano} 
we find $(\lambda_{*},u_{*}) \in \Sigma_0$ satisfying
$$
h(\lambda_{*},u_{*})= \frac{1}{\lambda_{*}} - g(|\nabla u_{*}|_2^2)  = 0.
$$
In particular, $(\lambda_{*},u_{*})$ is a positive solution of (\ref{Pa1}) with $1/\lambda_{*} = g(|\nabla u_{*}|_2^2)$, that is,
\begin{equation*}
    \left\{ \begin{array}{ll}
         -\Delta (g(|\nabla {u_{*}}|_2^2){u_{*}} +{u_{*}}^r) = a {u_{*}} + b {u_{*}}^p&\mbox{in } \Omega,  \\
         {u_{*}} = 0& \mbox{on }\partial\Omega.
    \end{array} \right.
\end{equation*}
Thus, $u_{*}$ is a positive solution of \eqref{P1}.
\end{proof}

\subsection{The case $(0,\lambda_1/a) \subset \mbox{Proj}_{\R}\Sigma_0$.}
Now, we will prove our second result of existence of  solutions to (\ref{P1}) that deals with the case in which $(0,\lambda_1/a) \subset \mbox{Proj}_{\R}\Sigma_0$. For this, we will assume that: \medskip

\begin{enumerate}[label=(g\arabic*),ref=g\arabic*,start=2]
    \item \label{g_{2}} $g:[0,\infty) \rightarrow [0,\infty)$ is a bounded continuous function such that $g(0)>0$.
\end{enumerate}
\medskip
Then we have

\begin{taggedtheorem}{B}
Assume that $g$ satisfies \eqref{g_{2}}.
\begin{enumerate}
    \item[(i)] If $r=p$ and $b>\lambda_1$ then problem (\ref{P1}) admits (at least) one  solution for each $a< g(0)\lambda_1$.
    \item[(ii)] If $b>0$, $r<p$, $p/r <(N+2)/(N-2)$ and $g(0)\lambda_1>\phi(s_0)$ then problem (\ref{P1}) admits (at least) one  solution for each $a \in (\phi(s_0),g(0)\lambda_1)$. 

\noindent    Recall that $\phi$ and $s_{0}$ are defined in
    \eqref{eq:phi} and \eqref{eq:s0}.
\end{enumerate}
\end{taggedtheorem}
\begin{proof}
In both cases, by Corollary \ref{Existencia} and Remark \ref{remark1}, $(0,\lambda_1/a) \subset \mbox{Proj}_{\R}\Sigma_0$. Thus:\\
If (i) occurs then in $(\mu_{1},u_{1}):=(\lambda_1/a,0) \in \overline{\Sigma}_0$ we have
$$
h(\mu_{1}, u_{1})= h(\lambda_1/a,0) = \frac{a}{\lambda_1} - g(0)<0,
$$
thanks to $a<g(0)\lambda_1$. On the other hand, since $g$ is bound,
$$
\liminf_{\substack{(\lambda,u) \in \Sigma_0 \\ \lambda \rightarrow 0^+}} h(\lambda,u) =\liminf_{\substack{(\lambda,u) \in \Sigma_0 \\ \lambda \rightarrow 0^+}} \left(\frac{1}{\lambda} - g(|\nabla u|_2^2)\right) = +\infty.
$$
Therefore, there exists some $(\mu_1,u_2) \in \Sigma_0$ such that $h(\mu_2,u_2)>0$. By Bolzano Theorem \ref{th:Bolzano} we find $(\lambda_*,u_*) \in \Sigma_0$ satisfying $h(\lambda_*,u_*)=0$, thus $u_*$ is a positive solution of (\ref{P1}).\\
Similarly, if (ii) occurs  we have
$$
h(\lambda_1/a,0) = \frac{a}{\lambda_1} - g(0)<0 \quad \mbox{and} \quad \liminf_{\substack{(\lambda,u) \in \Sigma_0 \\ \lambda \rightarrow 0^+}} h(\lambda,u)=+\infty.
$$
By Bolzano Theorem \ref{th:Bolzano} we find $(\lambda_*,u_*) \in \Sigma_0$ satisfying $h(\lambda_*,u_*)=0$, thus $u_*$ is a positive solution of (\ref{P1}).
\end{proof}

\subsection{The case  $\Sigma_0 = \{\lambda_1/a\}$}
To finish, we will show  a result of existence of positive solution of (\ref{P1}) when $\Sigma_0 = \{\lambda_1/a\}$.  In this case we will not use the Bolzano Theorem \ref{th:Bolzano} and the only assumption on $g$ is that it is a continuous positive function.

\begin{taggedtheorem}{C}
Assume that $g:[0,\infty) \rightarrow [0,\infty)$ is continuous and positive. If $b=\lambda_1$ and $r=p<2$, then problem \eqref{P1} admits a positive solution if, and only if, $a\in \lambda_1 R[g] :=\{\lambda_1g(s); s \geq 0\} \subset (0,\infty)$.
\end{taggedtheorem}
\begin{proof}
Since $b=\lambda_1$ and $r=p$, it follows from Corollary \ref{Existencia} and Remark \ref{remark1} that $\Sigma_0 = \{\lambda_1/a\}$. Moreover, all positive solutions of (\ref{P1}) are given by
$$
u_c:= q_{\lambda_1/a}(c \varphi_1) \Longleftrightarrow \frac{a}{\lambda_1}u_c + u_c^r = c \varphi_1 \quad c \in \R, ~c>0.
$$
Thus,
\begin{eqnarray*}
\nabla(au_c + \lambda_1 u_c^r) = ( a + \lambda_1 ru_c^{r-1}) \nabla u_c &=& c \lambda_1 \nabla \varphi_1 \\
 \nabla u_c &=& \frac{c\lambda_1}{ a + \lambda_1 ru_c^{r-1}} \nabla \varphi_1\\
  |\nabla u_c|^2 &=&  \left(\frac{c\lambda_1}{a + \lambda_1 ru_c^{r-1}}\right)^2 |\nabla \varphi_1|^2
\end{eqnarray*}
and
\begin{eqnarray*}
|\nabla u_c|_2^2 &=&  \int_\Omega\left(\frac{c\lambda_1}{a + \lambda_1 rq_{\lambda_1/a}(c\varphi_1)^{r-1}}\right)^2 |\nabla \varphi_1|^2\\
&=& \int_\Omega\left(\frac{\lambda_1}{a/c + \lambda_1 rc^{r-2}(q_{\lambda_1/a}(c\varphi_1)/c)^{r-1}}\right)^2 |\nabla \varphi_1|^2.
\end{eqnarray*}
Consequently, since $r<2$, the map $c \in [0,\infty)\mapsto |\nabla u_c|_2 \in [0,\infty)$  is continuous and increasing. Let us prove that it is one-to-one. Indeed, by monotonicity, it follows that $c \mapsto |\nabla u_c|_2$ is an injection. To prove that it is a surjection,  it is sufficient to show that 
$$
\lim_{c \rightarrow +\infty} |\nabla u_c|_2 = +\infty.
$$
Indeed, it follows from Lemma \ref{lem:q},
$$q_{\lambda_1/a}(c\varphi_1)^{r-1} \leq \lambda_1 (c \varphi_1)^{r-1}/a \leq \lambda_1 c^{r-1}/a.$$
Thus,
\begin{eqnarray*}
|\nabla u_c|_2^2 =  \int_\Omega\left(\frac{c\lambda_1}{a + \lambda_1 rq_{\lambda_1/a}(c\varphi_1)^{r-1}}\right)^2 |\nabla \varphi_1|^2 &\geq&  \int_\Omega\left(\frac{c\lambda_1}{a +  r\lambda_1^2 c^{r-1}/a}\right)^2 |\nabla \varphi_1|^2 \\
&=&\left(\frac{\lambda_1}{ac^{-1} +  r\lambda_1^2 c^{r-2}/a}\right)^2.
\end{eqnarray*}
Once that $r<2$, one can infer
$$
\lim_{c \rightarrow+\infty} |\nabla u_c|_2 = +\infty.
$$
Subsequently, for each $a \in \{\lambda_1g(s); s \geq 0\}$, there exists $s'\geq0$ such that
$$
a=\lambda_1 g(s').
$$
By the previous discussion, we can choose $c>0$ satisfying 
$$
|\nabla u_c|_2 = s'.
$$
Therefore
$$
a= \lambda_1 g(|\nabla u_c|_2) \Longleftrightarrow \frac{1}{\lambda_1/a} = g(|\nabla u_c|_2)
$$
and $u_c$ is a  solution of (\ref{P1}). 
\end{proof}

\end{document}